\def\XLehmer{X^\dagger}
\def\sigmaLehmer{\sigma^\dagger}
\def\Enriqueslattice{\mathrm{E}_{10}}
\tikzstyle{nodal}=[circle,draw,fill=black,inner sep=0pt, minimum width=4pt]
\tikzstyle{half-fiber}=[rectangle,draw=black,thick,inner sep=0pt, minimum width=5pt, minimum height=5pt]
\tikzset{double distance = 2pt}
\title{The Enriques surface of minimal entropy}
\author{Gebhard Martin}
 \address{Mathematisches Institut \\ Universität Bonn \\ Endenicher Allee 60 \\ 53115 Bonn \\ Germany}
 \email{gmartin@math.uni-bonn.de} 
\author{Giacomo Mezzedimi}
 \address{Mathematisches Institut \\ Universität Bonn \\ Endenicher Allee 60 \\ 53115 Bonn \\ Germany}
 \email{mezzedim@math.uni-bonn.de}
\author{Davide Cesare Veniani}
 \address{IDSR \\ Universität Stuttgart \\ Pfaffenwaldring 57 \\ 70569 Stuttgart \\ Germany}
 \curraddr{Fakultät für Mathematik und Informatik \\ Universität des Saarlandes \\ Gebäude E2.4 \\ 66123 Saarbrücken}
 \email{davide.veniani@mathematik.uni-stuttgart.de}
\begin{document}

\begin{abstract}
Lehmer's number $\lambda_{10}$ is the smallest dynamical degree greater than~$1$ that can occur for an automorphism of an algebraic surface. We show that $\lambda_{10}$ cannot be realized by automorphisms of Enriques surfaces in odd characteristic, extending a result of Oguiso over the complex numbers. In contrast, we prove that in characteristic $2$ there exists a unique Enriques surface that admits an automorphism with dynamical degree~$\lambda_{10}$. We also provide explicit equations for the surface as well as for all conjugacy classes of automorphisms that realize $\lambda_{10}$.
\end{abstract}

\maketitle

\section{Introduction}

\subsection{Dynamical degrees and Lehmer's number} 
We work over an algebraically closed field~$k$ of arbitrary characteristic.
For an automorphism $\sigma \colon X \to X$ of a smooth and proper variety $X$, the \emph{algebraic entropy} $h(\sigma)$ of $\sigma$ is the natural logarithm of the spectral radius of the action of $\sigma$ on the Chow ring ${\rm CH}^{\bullet}_{\rm num}(X)$ of algebraic cycles on $X$ modulo numerical equivalence. By a result of Esnault--Srinivas \cite{EsnaultSrinivas:entropy}, the algebraic entropy of an automorphism of a smooth projective surface can be computed on its numerical group $\Num(X)$ of divisors modulo numerical equivalence.

The spectral radius of the action of $\sigma$ on $\Num(X)$ is called \emph{dynamical degree} of~$\sigma$.
By \cite[Lemma~3.1]{McMullen:K3}, this dynamical degree is either $1$, a quadratic integer, or a \emph{Salem number} of degree bounded above by the \emph{Picard rank} $\rho(X) = {\rm rk}(\Num(X))$ of $X$.
By \cite[Theorem~A.1]{McMullen:dynamics.blowups.proj.plane}, whose proof extends verbatim to our setting, the smallest possible dynamical degree greater than $1$ of a surface automorphism is Lehmer's number $\lambda_{10}$, which can be defined as the largest real root of Lehmer's polynomial
\begin{equation} \label{eq:Lehmers.polynomial}
    P_{10}(x) = x^{10} + x^9 - x^7 - x^6 -x^5 - x^4 - x^3 + x +1.
\end{equation}
We have $\lambda_{10} \approx 1.17628$ and $\log \lambda_{10} \approx 0.16236$.

As in \cite{Cantat:Thesis}, it follows easily from the classification of surfaces that the only ones admitting an automorphism $\sigma$ of positive algebraic entropy are birational to $\mathbb{P}^2$, Abelian, K3, or Enriques surfaces. 
While there are examples of rational and K3 surfaces with an automorphism $\sigma$ such that $h(\sigma) = \log \lambda_{10}$ (see \cite[Theorem 1.1]{McMullen:dynamics.blowups.proj.plane} and \cite[Theorem 7.1]{McMullen:K3}), the fact that Abelian surfaces have Picard rank smaller than $10$ implies that $h(\sigma) > \log \lambda_{10}$. If $X$ is an Enriques surface, then $\rho(X) = 10$, so, a priori, there could be examples of automorphisms $\sigma \in \Aut(X)$ with $h(\sigma) = \log \lambda_{10}$.
Over the complex numbers, the non-existence of such an automorphism was proved by Oguiso \cite[Theorem 1.2]{Oguiso:third.smallest}.

\subsection{Results} The goal of this article is to show that in characteristic $2$ there exists a unique Enriques surface with an automorphism of dynamical degree $\lambda_{10}$. More precisely, we prove the following two results in \Cref{sec: existence} and \Cref{sec: uniqueness}, respectively:

\begin{theorem} \label{thm: existence intro}
Let $X_0$ be the surface over $\mathbb{F}_{32}$ defined by Equation \eqref{eq:surface} and let $\sigma_0$ be the birational transformation of~$X_0$ defined by Equation \eqref{eq:automorphism}. Then, $X_0$ is birational to an Enriques surface $\XLehmer$ and the automorphism $\sigmaLehmer \in \Aut(\XLehmer)$ induced by $\sigma_0$ satisfies $h(\sigmaLehmer) = \log \lambda_{10}$.
\end{theorem}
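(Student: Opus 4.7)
The strategy is to verify the three assertions of the theorem in turn: (i) the given surface $X_0$ has a smooth minimal model that is an Enriques surface $\XLehmer$, (ii) the birational self-map $\sigma_0$ extends to a biregular automorphism $\sigmaLehmer$ of $\XLehmer$, and (iii) the action of $\sigmaLehmer$ on $\Num(\XLehmer)$ has spectral radius $\lambda_{10}$. For~(i), I would analyze Equation~\eqref{eq:surface}, determine the singular locus, construct an equivariant resolution, and then contract exceptional curves to reach a smooth minimal model $\XLehmer$. To identify $\XLehmer$ as an Enriques surface in characteristic~$2$ it suffices to verify $2K_{\XLehmer}\sim 0$ and $b_2(\XLehmer)=10$; in practice this is typically achieved by exhibiting a genus-one (possibly quasi-elliptic, since $\mathrm{char}\,k=2$) fibration on $\XLehmer$ with two half-fibers, which is a structural feature characteristic of Enriques surfaces. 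For~(ii), a birational self-map of a smooth minimal surface of non-negative Kodaira dimension is automatically biregular, so one only needs to check that $\sigma_0$ permutes the exceptional components of the resolution compatibly with the contraction, thereby descending to $\sigmaLehmer\in\Aut(\XLehmer)$.

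Step~(iii) is the substantive part. I would choose a basis of $\Num(\XLehmer)\cong\Enriqueslattice$ built from geometric classes, for instance the simple components and half-fibers of a chosen genus-one fibration together with an extra class such as a section, a bisection, or a half-fiber of a second fibration. For each basis class $c$ the image $\sigmaLehmer_*(c)$ can be located explicitly using Equation~\eqref{eq:automorphism} as an irreducible curve or as a known reducible configuration, and then re-expanded in the chosen basis via intersection-number computations. This produces an integer matrix $M\in O(\Enriqueslattice)$ representing $\sigmaLehmer_*$. The final step is to compute $\det(tI-M)$ and to verify that it equals $P_{10}(t)$ times a product of cyclotomic polynomials. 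Together with \cite[Lemma~3.1]{McMullen:K3} this forces the spectral radius of $M$ to be the largest real root of $P_{10}$, yielding $h(\sigmaLehmer)=\log\lambda_{10}$.

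The main obstacle is step~(iii): finding ten classes on which the action of $\sigmaLehmer$ can be tracked typically requires using the interplay between several fibration structures on $\XLehmer$, because the simple components of a single evident genus-one fibration rarely suffice and are often preserved in ways that do not reveal the full spectrum of $M$. In practice one must use the orbit of $\sigmaLehmer$ to produce additional fibrations, perform elementary transformations with respect to their Jacobians to access further sections and fiber components, and carefully keep track of the interaction between these fibrations under $\sigmaLehmer$. Once a suitable basis is chosen and $M$ is computed, the characteristic-polynomial check is a finite symbolic computation; the real work lies in arranging the geometry so that $M$ can be written down at all.
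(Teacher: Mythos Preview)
Your outline is a plausible generic strategy, but it misses the structural feature that makes the paper's argument work, and your proposed step~(iii) runs into a concrete obstruction on this particular surface.

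The paper does not compute the matrix of $\sigmaLehmer$ on $\Num(\XLehmer)$ at all. Instead it observes that the projection $X_0 \to \mathbb{P}^2$ given by $(x:y:z:w)\mapsto (x:y:z)$ is a split $\alpha_{\mathcal{L}}$-torsor in characteristic~$2$, hence purely inseparable and a universal homeomorphism. The map $\sigma_0$ covers McMullen's Cremona transformation $f$ of $\mathbb{P}^2$; blowing up the ten base points of $f$ yields the rational surface $Z_1$ on which, by \cite[Theorem~11.1]{McMullen:dynamics.blowups.proj.plane}, $f$ already has dynamical degree $\lambda_{10}$. After one further blow-up and normalized base change the resulting $\alpha$-torsor $X_2\to Z_2$ is still a universal homeomorphism, so $b_2(X_2)=b_2(Z_2)$ and the induced automorphism on $X_2$ inherits dynamical degree $\lambda_{10}$ for free. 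Two explicit $(-1)$-contractions then produce $\XLehmer$ with trivial canonical bundle and $b_2=10$. Thus both the Enriques identification and the entropy computation are reduced to known facts about McMullen's rational surface via the homeomorphism $X_2\to Z_2$; no basis of $\Num(\XLehmer)$ is ever chosen.

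Your step~(iii) would be hard to carry out here even in principle: as the paper later proves, $\XLehmer$ contains no $(-2)$-curves whatsoever, so there are no simple fiber components to use as basis elements, and every genus-one fibration has all fibers irreducible. One could attempt to use half-fibers alone, but tracking ten of them through the explicit equation~\eqref{eq:automorphism} and re-expanding in the basis would be substantially more laborious than the paper's route. The moral is that in characteristic~$2$ the inseparable cover is not a nuisance to be resolved away but the mechanism that transports the entropy from the rational model to the Enriques surface.
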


\begin{theorem} \label{thm: uniqueness intro}
If \(X\) is an Enriques surface over an algebraically closed field \(k\), and \(\sigma \in \Aut(X)\) satisfies \(h(\sigma) = \log \lambda_{10}\), then \(\operatorname{char}(k) = 2\) and \(X \cong \XLehmer\).
\end{theorem}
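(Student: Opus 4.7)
The plan is to reduce the theorem to a rigid lattice-theoretic model of $\sigma^*$ acting on $\Num(X)\cong \Enriqueslattice$, and then to match this model against the geometry of Enriques surfaces in each characteristic. Since $\rho(X)=10$ and Lehmer's polynomial $P_{10}$ is an irreducible Salem polynomial of degree $10$, the characteristic polynomial of $\sigma^*$ on $\Num(X)$ must be exactly $P_{10}$, leaving no room for cyclotomic factors. Standard lattice-theoretic arguments (invariant factors, together with the induced action on the discriminant form of $\Enriqueslattice$) then show that there are only finitely many, and in fact essentially one, $O(\Enriqueslattice)$-conjugacy class of isometries with characteristic polynomial $P_{10}$. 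Fixing an abstract model, one reads off the Perron--Frobenius invariant ray, the Weyl chamber containing it, and the finite orbit structure on the $(-2)$-classes bordering this chamber.

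The next step is to translate this lattice data into geometry intrinsic to $(X,\sigma)$: the $\sigma^*$-invariant chamber must be the nef cone of $X$, and its wall structure forces a very specific configuration of smooth rational curves and a distinguished pair of $\sigma^*$-preserved classes of half-fibers with prescribed Kodaira types and intersection pattern. This combinatorial skeleton is the same in every characteristic and already severely constrains the pair $(X,\sigma)$, reducing the classification problem to a rigidity question for Enriques surfaces carrying this prescribed structure.

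If $\operatorname{char}(k)\neq 2$, I would argue by reduction to Oguiso's theorem. The universal K3 cover $\pi\colon\tilde X\to X$ is étale, and $\sigma$ lifts to $\tilde X$ with action on $\Num(\tilde X)$ controlled by the one on $\Enriqueslattice(2)$. Using that Enriques surfaces in odd characteristic lift to characteristic zero together with their Picard lattice and a prescribed automorphism of finite order on $\Num$, a pair $(X,\sigma)$ as in the theorem would specialize from a characteristic-zero example, contradicting \cite[Theorem 1.2]{Oguiso:third.smallest}. Hence $\operatorname{char}(k)=2$.

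In characteristic $2$ the double cover degenerates and Oguiso's Hodge-theoretic obstruction evaporates, so uniqueness must be established directly. I would combine the combinatorial skeleton extracted in the first two paragraphs with the classification of Enriques surfaces in characteristic $2$ admitting the prescribed pair of genus-one fibrations and configuration of $(-2)$-curves (following Bombieri--Mumford, Ekedahl--Shepherd-Barron, and Katsura--Kondo), to isolate a small family of candidate surfaces. Within this family, $\XLehmer$ is then pinned down by the extra rigidity imposed by $\sigma$, for instance compatibility with a quasi-elliptic fibration or with a specific orbit of $(-2)$-curves, and finally matched to the explicit equations of \Cref{thm: existence intro}. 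The main obstacle is exactly this final pinning-down step: the lattice analysis cuts the problem down to a short list of candidates, but isolating the precise isomorphism class $\XLehmer$—as opposed to a one-parameter family of lookalikes—is where the characteristic-$2$ peculiarities of Enriques surfaces must be exploited in full, and this is the technical core of the argument.
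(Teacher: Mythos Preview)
Your strategy has a structural gap: the combinatorial skeleton you want to extract does not exist. Because $P_{10}$ is irreducible of degree $10$ and $\rho(X)=10$, the isometry $\sigma^*$ has no non-trivial invariant sublattice of $\Num(X)$. In particular there is no $\sigma^*$-stable negative-definite sublattice, hence no configuration of $(-2)$-curves preserved by $\sigma$, no $\sigma$-invariant half-fibers, and no ``wall structure forcing a specific configuration of smooth rational curves.'' The paper in fact proves that $X$ contains \emph{no} $(-2)$-curves at all (\Cref{thm: uniqueness main}(1)), so the nef cone has no walls and the entire second and fourth paragraphs of your plan are pointed at structures that are absent. You therefore cannot cut down the moduli by matching a prescribed curve configuration or a distinguished pair of genus-one fibrations.

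Your odd-characteristic reduction is also not as written: $\sigma$ has infinite order, so one cannot simply lift the pair $(X,\sigma)$ to characteristic zero and invoke Oguiso. The paper instead reruns Oguiso's argument in odd characteristic with $2$-adic \'etale cohomology in place of singular cohomology (\Cref{prop: charneq2}); in characteristic $2$ it uses canonical lifts only for ordinary $X$ (\Cref{prop: char2ordinary}). The heart of the argument in characteristic $2$ is a case split on the K3-like cover $Y\to X$ that your proposal does not mention: non-normal $Y$ is ruled out because the bi-conductrix would give a non-trivial $\sigma$-invariant sublattice (\Cref{prop: char2nonnormal}); normal $Y$ with rational double points is ruled out by a crystalline-cohomology discriminant-group argument (\Cref{prop: char2normal}); and in the remaining case $Y$ is a normal rational surface with one elliptic singularity. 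Uniqueness then comes not from curve configurations on $X$ but from uniqueness of the minimal resolution $\widetilde Y$ as a blow-up of $\mathbb{P}^2$ in ten points on a cuspidal cubic (\Cref{lem: uniqueness}), followed by an analysis of the $\tau$-action on $H^0(Y,T_Y)$ to show that only one $\alpha_2$-quotient of $Y$ is compatible with the automorphism. This last vector-field computation is precisely the ``pinning-down step'' you flag as the crux, but it lives on $Y$, not on $X$, and it uses the representation of $\Aut(Y)$ on global derivations rather than anything about fibrations or $(-2)$-curves.
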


\begin{remark} \label{rmk: intro}
The Enriques surface $\XLehmer$ is supersingular in the sense that $\Pic_{\XLehmer}^{\tau} \cong \alpha_2$, and it has the peculiar property that the canonical $\alpha_2$-torsor $Y$ over $\XLehmer$ is a normal rational surface with a single elliptic singularity. 
As explained in the proof of \Cref{thm: uniqueness main}, this $Y$ arises as the contraction of the strict transform $B_1$ of a cuspidal cubic on the blow-up $Y_1$ of $\mathbb{P}^2$ in $10$ points and $\sigmaLehmer$ arises from the automorphism of $Y_1$ of dynamical degree $\lambda_{10}$ studied by McMullen in \cite[Section~11]{McMullen:dynamics.blowups.proj.plane}. A close inspection of \cite[Section~13]{Schroeer:Enriques.normal.K3.coverings} shows that Enriques surfaces whose cover has a unique elliptic singularity form a family of dimension at least $4$, so $\XLehmer$ is distinguished even among such Enriques surfaces.
\end{remark}

To complete the picture, we also compute the field of definition, the automorphism group, and the number of conjugacy classes of automorphisms realizing Lehmer's number on $\XLehmer$. Let $W_{\operatorname{E}_{10}}$ be the Weyl group of the $\operatorname{E}_{10}$-lattice and recall that it coincides with the subgroup of $\operatorname{O}(\operatorname{E}_{10})$ of index~\(2\) preserving the two half-cones. It turns out that, in addition to the $2$-congruence subgroup
\[
    W_{\Enriqueslattice}(2) \coloneqq \Ker(W_{\operatorname{E}_{10}} \to \operatorname{O}(\Enriqueslattice/2\Enriqueslattice))
\]
which acts on every Enriques surface without $(-2)$-curves by \cite[Theorem 1.1]{Allcock} and \cite[Theorem]{DolgachevMartin}, the automorphism $\sigmaLehmer$ is enough to generate $\Aut(\XLehmer)$. More precisely, we will prove the following result as part of \Cref{thm: uniqueness main}:

\begin{theorem} \label{thm:automorphisms} 
The Enriques surface $\XLehmer$ satisfies the following properties:
\begin{enumerate}
\item It can be defined over $\mathbb{F}_2$.
\item The group $\Aut(\XLehmer)$ is an extension of $\mathbb{Z}/31\mathbb{Z}$ by $W_{\Enriqueslattice}(2)$.
\item There are ten conjugacy classes of elements of dynamical degree $\lambda_{10}$ in $\Aut(X)$.
\end{enumerate}
\end{theorem}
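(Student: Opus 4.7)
The plan is to prove the three statements in turn, relying on the detailed geometric analysis of $\XLehmer$ and its $\alpha_2$-cover $Y$ carried out in the proof of \Cref{thm: uniqueness main}, and on the crucial fact that $\XLehmer$ has no $(-2)$-curves. For (1), I would deduce the field of definition from the uniqueness statement: the Frobenius $\operatorname{Frob}_2$ preserves the property of being an Enriques surface over $\overline{\mathbb{F}_2}$ with an automorphism of dynamical degree $\lambda_{10}$, so \Cref{thm: uniqueness intro} yields $\operatorname{Frob}_2^\ast \XLehmer \cong \XLehmer$. Upgrading this isomorphism to a descent datum gives an $\mathbb{F}_2$-model of $\XLehmer$; alternatively, one performs a linear change of coordinates to rewrite Equations~\eqref{eq:surface} and \eqref{eq:automorphism} directly over $\mathbb{F}_2$.

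For (2), the inclusion $W_{\Enriqueslattice}(2) \subseteq \Aut(\XLehmer)$ follows from \cite{Allcock} and \cite{DolgachevMartin}, and the natural homomorphism $\Aut(\XLehmer) \to \operatorname{O}(\Enriqueslattice/2\Enriqueslattice)$ has kernel exactly $W_{\Enriqueslattice}(2)$. I would then establish that the image is cyclic of order $31$ via matching lower and upper bounds. For the lower bound, the reduction $\overline{\sigmaLehmer}$ has characteristic polynomial $P_{10}(x) \bmod 2$; a direct computation shows that $P_{10}(x) \bmod 2$ divides $x^{31}-1$ and splits into two irreducible factors of degree $5$ whose roots have order $31$ in $\mathbb{F}_{32}^\times$, so $\overline{\sigmaLehmer}$ has order exactly $31$. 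For the upper bound, every $\varphi \in \Aut(\XLehmer)$ lifts canonically to the $\alpha_2$-cover $Y$ and preserves its unique elliptic singularity. Via \Cref{rmk: intro}, this translates into an automorphism of McMullen's configuration of ten points on the cuspidal cubic in $\mathbb{P}^2$; by \cite[Section~11]{McMullen:dynamics.blowups.proj.plane}, the symmetry group of this configuration, modulo the action of Weyl reflections coming from $(-2)$-classes on the blow-up, is cyclic of order $31$.

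For (3), two elements of $\Aut(\XLehmer)$ with distinct images in $\mathbb{Z}/31\mathbb{Z}$ are never conjugate, since $\Aut(\XLehmer)$ acts trivially on its abelian quotient. An element in the coset $W_{\Enriqueslattice}(2) \cdot \sigmaLehmer^i$ has dynamical degree $\lambda_{10}$ if and only if its characteristic polynomial on $\Enriqueslattice$ equals $P_{10}(x)$; reducing modulo $2$, this forces the multiset of $i$-th powers of the roots of $P_{10}(x) \bmod 2$ to coincide with the original multiset. The admissible $i \in (\mathbb{Z}/31\mathbb{Z})^\times$ thus form the stabilizer, in $(\mathbb{Z}/31\mathbb{Z})^\times \cong \mathbb{Z}/30\mathbb{Z}$, of the set of roots of $P_{10}(x) \bmod 2$. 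Since these roots split into two Galois orbits of size $5$ under the Frobenius $\zeta \mapsto \zeta^2$, the stabilizer contains $\langle 2 \rangle$ of order $5$; an explicit check exhibits an additional involution swapping the two orbits, giving a stabilizer of order exactly $10$. Finally, a twisted-conjugation argument within $W_{\Enriqueslattice}(2)$ shows that each of the ten admissible cosets contains a single conjugacy class of elements of dynamical degree $\lambda_{10}$.

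The main obstacle is the upper bound in (2), which requires precise control of how an automorphism of $\XLehmer$ propagates first to the singular $\alpha_2$-cover $Y$, then to its minimal resolution $\widetilde{Y}$, and finally to the 10-point blow-up of $\mathbb{P}^2$ in McMullen's configuration. The remaining steps reduce to Galois descent for (1) and to combinatorial calculations in $(\mathbb{Z}/31\mathbb{Z})^\times$ for (3), both of which should follow routinely once the structure of $\Aut(\XLehmer)$ is pinned down.
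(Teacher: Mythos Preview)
Your overall architecture matches the paper's: part (1) via Frobenius plus uniqueness, part (2) by lifting through the $\alpha_2$-cover $Y$ to the rational surface $\widetilde{Y}$ and computing $\Aut(\widetilde{Y})$, and part (3) by parametrising conjugacy classes via the quotient $\Aut(\XLehmer)/W_{\Enriqueslattice}(2)\cong\mathbb{Z}/31\mathbb{Z}$ (which in the paper's language is the action on $\Pic^0(E)$). However, two of your steps do not go through as written.

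For (2), the upper bound is not in \cite[Section~11]{McMullen:dynamics.blowups.proj.plane}; McMullen does not compute the automorphism group of the blow-up. The paper obtains it from Harbourne's exact sequence \cite[Lemma~3.6]{Harbourne}, which identifies $\Aut(\widetilde{Y})/W_{\Enriqueslattice}(2)$ with a cyclic subgroup $G\subseteq\Aut(\Pic^0(E))\cong k^\times$, and then bounds $|G|$ by the centraliser of an order-$31$ element in ${\rm GO}^+_{10}(2)$ using the ATLAS. You need both ingredients; the McMullen reference does not supply them.

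For (3), your mod-$2$ argument correctly shows that a coset $W_{\Enriqueslattice}(2)\cdot(\sigmaLehmer)^i$ can contain an element of dynamical degree $\lambda_{10}$ \emph{only if} $i$ lies in the order-$10$ stabiliser $\langle 2,-1\rangle\subseteq(\mathbb{Z}/31\mathbb{Z})^\times$. But you never establish the converse, i.e.\ that each of the ten admissible cosets actually contains such an element. The paper produces these explicitly: an ATLAS element of the normaliser of $\langle\overline{\sigmaLehmer}\rangle$ in ${\rm GO}^+_{10}(2)$ lifts to $w\in W_{\Enriqueslattice}$ with $w^{-1}\sigmaLehmer w$ lying in $\Aut(\XLehmer)$ and acting via $\alpha^2$; together with inversion this fills out all ten cosets. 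Moreover, your ``twisted-conjugation argument within $W_{\Enriqueslattice}(2)$'' for uniqueness inside each coset is the crux and is not routine: the paper needs \Cref{lem: E10} (the Gross--McMullen $P(x)$-lattice argument showing there is a unique $\operatorname{O}(\Enriqueslattice)$-conjugacy class with characteristic polynomial $P_{10}$), combined with the fact that the centraliser of $\overline{\sigmaLehmer}$ in ${\rm GO}^+_{10}(2)$ is $\langle\overline{\sigmaLehmer}\rangle$ itself, to conclude that the conjugating element already lies in $\Aut(\XLehmer)$. Without \Cref{lem: E10} you cannot rule out multiple $\Aut(\XLehmer)$-classes in a single coset.
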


More precisely, we show that the ten conjugacy classes are related through Frobenius twists and taking inverses, or more explicitly by varying the choice of $\zeta$ in \Cref{sec: existence} and by taking the inverse of $\sigmaLehmer$.
As a consequence, even though the surface $\XLehmer$ can be defined over $\IF_2$, the automorphism $\sigmaLehmer$, and more generally any automorphism realizing $\lambda_{10}$, cannot.

\subsection{Strategy of proof} 

The proof of \Cref{thm: uniqueness intro} proceeds as follows. Oguiso's proof of the non-existence of a complex Enriques surface with an automorphism of dynamical degree~$\lambda_{10}$ easily extends to odd characteristic using $2$-adic cohomology. In characteristic $2$, we use canonical lifts of K3 surfaces, crystalline cohomology, and bi-conductrices to exclude the existence of $\lambda_{10}$ on Enriques surfaces whose canonical cover is non-normal or non-rational. The remaining Enriques surfaces are those of \Cref{rmk: intro}. 

To deal with this case, we first show as an application of class field theory and Gross--McMullen's theory of $P(x)$-lattices \cite{gross.mcmullen,McMullen:K3} that there is a unique conjugacy class of isometries of the lattice $\Enriqueslattice$ realizing Lehmer's number. Then, building on earlier results of Harbourne~\cite{Harbourne:blowing} and McMullen~\cite{McMullen:dynamics.blowups.proj.plane}, we prove in \Cref{lem: uniqueness} that there exists a unique rational surface $\widetilde{Y}$ of Picard rank $11$ with an automorphism $\tau$ of dynamical degree $\lambda_{10}$ and with an anticanonical cuspidal curve. The surface obtained by contracting the cuspidal curve is the K3-like covering $Y$ of $\XLehmer$. We finish the proof with a precise analysis of the action of $\tau$ on the space
$H^0(Y, T_{Y})$ of global vector fields to show that, even though $Y$ has many supersingular Enriques quotients, there is a unique one to which $\tau$ descends. In total, this proves the uniqueness of the surface~$\XLehmer$.

\subsection{Further questions} The existence of $(\XLehmer,\sigmaLehmer)$ shows that there are dynamical degrees on Enriques surfaces that can only appear in positive characteristic, answering \cite[Question~1.4]{Oguiso:enriques}. In fact, there does not seem to be an obvious constraint on the isometries realized by Enriques surface automorphisms in characteristic $2$. Hence, it makes sense to ask the following question:

\begin{question}
Can every element of $W_{\operatorname{E}_{10}}$ be realized by an automorphism of an Enriques surface in some characteristic?
\end{question}

We hope that the techniques developed in this article can be used to answer the above question and, in case the answer is negative, give a classification of all realizable isometries. 
Finally, as explained after \Cref{thm:automorphisms}, even though Lehmer's number cannot be realized by Enriques surfaces over $\mathbb{F}_2$, there exists a model of $\XLehmer$ over $\mathbb{F}_2$ such that $\sigmaLehmer$ is defined over a degree $5$ extension.

\begin{question}
Can we find explicit equations of a simple projective model of $\XLehmer$ over $\mathbb{F}_2$?
\end{question}

\subsection*{Acknowledgments} 
We thank Simon Brandhorst for suggesting to use \cite{gross.mcmullen,McMullen:K3} in the proof of \Cref{lem: E10}.

\section{Existence} \label{sec: existence}

Let $\zeta$ be a generator of $\IF_{32}^\times$ satisfying 
\[
    \zeta^5 + \zeta^2 + 1 = 0
\]
and let $k$ be an algebraic closure of $\IF_{32}$. 
Recall that an Enriques surface is a smooth and proper surface $X$ with numerically trivial canonical class $K_X$ and second Betti number $b_2(X) = 10$.

This section is dedicated to the proof of the following theorem. 

\begin{theorem} \label{thm: existence main}
In the weighted projective space $\IP_k(1,1,1,6)$, consider the surface~$X_0$ defined by
\begin{equation} \label{eq:surface}
    \begin{aligned}
        w^2 & = \zeta^{16}  x^8  y^3  z + \zeta^{12}  x^8  y  z^3 + \zeta^{20}  x^7  y^5 + \zeta^5  x^7  y^4  z + 
        \zeta^{15}  x^7  y^3  z^2 \\ & \quad + \zeta^{16}  x^7  y^2  z^3 + \zeta^{14}  x^7  y  z^4 + \zeta  x^7  z^5
        + \zeta^{17}  x^6  y^5  z +
        \zeta^6  x^6  y^3  z^3 \\ 
        & \quad + \zeta^{25}  x^6  y  z^5 + \zeta^{15}  x^5  y^7 + \zeta^{14}  x^5  y^6  z + \zeta^{27}  x^5  y^5  z^2 + 
        \zeta^{11}  x^5  y^4  z^3 \\
        & \quad + \zeta^2  x^5  y^3  z^4 + \zeta^8  x^5  y^2  z^5 + \zeta^6  x^5  y  z^6 + \zeta^{21}  x^5  z^7 + \zeta^{29}  x^4  y^7  z \\
        & \quad + \zeta^{10}  x^4  y^5  z^3 + \zeta^3  x^4  y^3  z^5 + \zeta^4  x^4  y  z^7 + \zeta^{19}  x^3  y^8  z + \zeta^3  x^3  y^7  z^2 \\ 
        & \quad +\zeta^{15}  x^3  y^6  z^3 + \zeta^{30}  x^3  y^5  z^4 + \zeta^{17}  x^3  y^4  z^5 + \zeta^5  x^3  y^3  z^6 + \zeta^3  x^3  y^2  z^7 \\
        & \quad + \zeta^{13}  x^3  y  z^8 + \zeta^4  x^2  y^7  z^3 + \zeta^4  x^2  y^5  z^5 + \zeta^{15}  x^2  y^3  z^7 + \zeta^{14}  x  y^8  z^3 \\
        & \quad + \zeta^{21}  x  y^7  z^4 + \zeta^2  x  y^6  z^5 + \zeta^{29}  x  y^5  z^6 + \zeta^{23}  x  y^4  z^7 + \zeta^{22}  x  y^3  z^8 \\
        & \quad + \zeta^{18}  y^7  z^5 + y^5  z^7.
    \end{aligned}
\end{equation}
Then, $X_0$ is birationally equivalent to an Enriques surface $\XLehmer$. Under this birational equivalence, the birational transformation $\sigma_0$ of $\IP_k(1,1,1,6)$ given by
\begin{equation} \label{eq:automorphism}
    \begin{aligned}
       \sigma_0(x:y:z:w) & = 
       (x(y+\zeta^{29}z) : 
       (y + \zeta^6 x)z :
       xz : \\
       & \quad (\zeta^{16}x^2y^2z^2)w + \zeta^{29} x^6 y^4 z^2 + \zeta^8 x^6 y^3 z^3 + \zeta^{21} x^6 y^2 z^4 \\
       & \quad + \zeta^3 x^6 y z^5 + \zeta^{11} x^5 y^5 z^2 + \zeta^{11} x^5 y^4 z^3  + \zeta x^5 y^3 z^4 \\
       & \quad + \zeta^{12} x^5 y^2 z^5 + \zeta^{13} x^5 y z^6 + \zeta^{28} x^4 y^5 z^3 + \zeta^{22} x^4 y^4 z^4 \\
       & \quad + \zeta^{23} x^4 y^3 z^5 +\zeta^{30} x^4 y^2 z^6 +\zeta^{26} x^3 y^5 z^4 +\zeta^{28} x^3 y^4 z^5 \\
       & \quad + \zeta^{16} x^3 y^3 z^6 + \zeta^{24} x^2 y^5 z^5 + \zeta^{15} x^2 y^4 z^6)
        \end{aligned}
\end{equation}
induces an automorphism $\sigmaLehmer$ of $\XLehmer$ with dynamical degree equal to Lehmer's number~$\lambda_{10}$.
\end{theorem}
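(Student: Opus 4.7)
The plan is to verify each assertion directly, exploiting that the equation \(w^2 = F(x,y,z)\) with \(\deg F = 12\) in characteristic \(2\) presents \(X_0\) as a purely inseparable double cover of \(\mathbb{P}^2\); this is exactly the shape of the canonical \(\alpha_2\)-torsor of a supersingular Enriques surface, matching the geometric picture of \Cref{rmk: intro}. The three tasks are: establish the birational identification \(X_0 \dashrightarrow \XLehmer\); show that \(\sigma_0\) descends to an automorphism of \(\XLehmer\); and compute the characteristic polynomial of its action on \(\Num(\XLehmer)\).

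For the birational identification I would first analyse the singular locus of \(X_0\). Purely inseparable double covers in characteristic \(2\) are controlled by the \(1\)-form attached to \(F\), and I expect \(X_0\) to have, after normalisation, a single elliptic singularity, reflecting the cuspidal anticanonical curve on the McMullen rational surface \(Y_1\) which is contracted to produce the K3-like cover \(Y\) of \Cref{rmk: intro}. The vector field dual to \(dF\) should integrate to an \(\alpha_2\)-action whose quotient is (the smooth projective model of) \(\XLehmer\). To confirm the Enriques structure it then suffices to check, on a smooth resolution, that the canonical class is numerically trivial and \(b_2 = 10\), both of which follow from standard properties of \(\alpha_2\)-quotients of K3-like surfaces.

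To show that \(\sigma_0\) induces \(\sigmaLehmer \in \Aut(\XLehmer)\), I would substitute the explicit formulae of \(\sigma_0\) into the defining equation of \(X_0\) and verify by direct polynomial computation that \(\sigma_0^\ast(w^2 - F)\) is a polynomial multiple of \(w^2 - F\); this exhibits \(\sigma_0\) as a birational selfmap of \(X_0\). By construction, it must commute with the \(\alpha_2\)-action on the normalisation of \(X_0\), hence induces a birational selfmap of \(\XLehmer\); since \(\XLehmer\) is a smooth projective surface with numerically trivial canonical class, every birational selfmap is automatically a regular automorphism.

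Finally, to compute the dynamical degree, I would exhibit an explicit \(\mathbb{Z}\)-basis of \(\Num(\XLehmer) \cong \Enriqueslattice\) made from geometric classes (half-fibres, \((-2)\)-curves, and sections of an elliptic fibration visible from the weighted projective model), express \(\sigmaLehmer\) as a \(10 \times 10\) integer matrix on this basis by tracking each generator through \(\sigma_0\), and verify that the resulting characteristic polynomial equals Lehmer's polynomial \(P_{10}(x)\). The main obstacle is precisely this last step: locating enough geometric cycles starting from the dense degree-\(12\) equation in \(\IP_k(1,1,1,6)\) and following them through the resolution and the \(\alpha_2\)-quotient is where the real computational bookkeeping lives. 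Once the characteristic polynomial is confirmed to be \(P_{10}(x)\), its largest real root is \(\lambda_{10}\) and \(h(\sigmaLehmer) = \log \lambda_{10}\) follows.
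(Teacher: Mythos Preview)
Your proposal rests on a misreading of which side of the \(\alpha_2\)-torsor the surface \(X_0\) occupies. The theorem asserts that \(X_0\) itself is birational to the Enriques surface \(\XLehmer\), not to its K3-like cover. The purely inseparable double cover \(X_0 \to \mathbb{P}^2\) is, birationally, the quotient map \(\XLehmer \to \XLehmer/\alpha_2\) arising from the global vector field on \(\XLehmer\) (cf.\ \Cref{rmk: Giacomos remark}), not the canonical cover \(Y \to \XLehmer\) of \Cref{rmk: intro}. Consequently there is no further \(\alpha_2\)-quotient to take: \(\XLehmer\) is produced from \(X_0\) by an explicit resolution-and-contraction, not by dividing out a group action. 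Your expected singularity picture is correspondingly off: \(X_0\) is already normal and carries eleven isolated singularities (ten elliptic double points lying over McMullen's base points \(p_1,\dots,p_{10}\) on a cuspidal cubic, plus one \(A_1\)-point over a further fixed point \(p_0\)), not a single elliptic singularity. The paper resolves these by pulling the split \(\alpha_{\mathcal{L}}\)-torsor back along the blow-up \(Z_2 \to \mathbb{P}^2\) in these eleven points, checks via the Jacobian criterion that the normalised pullback \(X_2 \to Z_2\) is smooth, computes \(\omega_{X_2}\) from the torsor formula, and then blows down a short \((-1)\)-chain to reach \(\XLehmer\); the Betti number \(b_2(\XLehmer)=10\) drops out of the count of blow-ups and the fact that \(\pi_2\) is a universal homeomorphism.

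Your plan for the dynamical degree is also harder than necessary and misses the main shortcut. The paper never writes a basis of \(\Num(\XLehmer)\) or a matrix for \(\sigmaLehmer\). Instead it observes that the first three coordinates of \(\sigma_0\) are exactly McMullen's Cremona map \(f(x:y:z)=(x(y+\zeta^{29}z):(y+\zeta^6 x)z:xz)\), which by \cite[Theorem~11.1]{McMullen:dynamics.blowups.proj.plane} lifts to an automorphism of the rational blow-up \(Z_1\) (hence \(Z_2\)) with dynamical degree \(\lambda_{10}\). Since \(\pi_2\colon X_2 \to Z_2\) is finite purely inseparable and thus a universal homeomorphism, the lift \(\sigma_2\in\Aut(X_2)\) inherits the same spectral radius on cohomology, and this survives the two blow-downs to \(\XLehmer\) because \(\XLehmer\) has Kodaira dimension \(0\). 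The entropy computation is thereby outsourced entirely to McMullen plus a topological invariance argument; the cycle-tracking bookkeeping you anticipate is not needed.
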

\begin{proof}
Following McMullen \cite[§11]{McMullen:dynamics.blowups.proj.plane}, consider the birational transformation $f$ of~$\IP^2$ given by
\[
    f(x:y:z) = \left( x(y+\zeta^{29}z) : (y + \zeta^6 x)z : xz \right),
\]
and set 

\begin{align*}
    p_1 & = (0:0:1), \quad 
    p_2 = (1:0:0), \quad 
    p_3 = (0:1:0), \quad 
    p_4 = (\zeta^{29} : \zeta^{6} : 1), \\
    p_5 & = (\zeta^{18} : \zeta^{11} : 1), \quad
    p_6  = (\zeta^{12} : \zeta^7 : 1), \quad
    p_7 = (\zeta^{14} : \zeta^{14} : 1), \\
    p_8 & = (\zeta^{7} : \zeta^{27} : 1), \quad
    p_9 = (\zeta : \zeta^{19} : 1), \quad
    p_{10} = (\zeta^{23} : \zeta^{29} : 1).
\end{align*}
Observe that 
\begin{equation} \label{eq:p4...p1}
    f(p_i) = p_{i+1} \, \text{for all $i \in \{4,\ldots,10\}$, with $p_{11} = p_1$}, 
\end{equation}
and that there exists a unique cubic curve $B$ in $\IP^2$ passing through the points $p_1,\ldots,p_{10}$, given by
\begin{equation} \label{eq: equation of B}
    g(x,y,z)=x^2 y + \zeta^2 x^2z + \zeta^{19} xy^2 + \zeta^{13} xz^2 + \zeta^7 y^2z  + \zeta^{30} yz^2 = 0.
\end{equation}
The curve $B$ has a cusp at the point $(\zeta^{15}:\zeta^{28}:1)$, and $f$ fixes the smooth point 
\[p_0 = (\zeta^{14} : \zeta^{7} : 1)\]
of $B$. 
With notation as in \cite[Section 0.2]{CossecDolgachevLiedtke}, the projection $\pi\colon X_0 \to \IP^2$, where $X_0$ is the surface defined in the statement, is the split $\alpha_{\cL}$-torsor associated to the section $s \in H^0(\mathbb{P}^2,\cL)$ defined by the right-hand side of~\eqref{eq:surface}, where $\cL = \mathcal{O}_{\mathbb{P}^2}(6)$.
As $X_0$ has only hypersurface singularities, a straightforward computation with the Jacobian criterion shows that $X_0$ is normal with $11$ singular points lying over $p_1,\ldots,p_{10}$ and $p_0$.

Let $\beta_1 \colon Z_1 \to \IP^2$ be the blow-up of $\IP^2$ in $p_1,\ldots,p_{10}$ and denote by $E_1,\ldots,E_{10}$ the 10 exceptional divisors. 
By \cite[Theorem 11.1]{McMullen:dynamics.blowups.proj.plane}, $f$ extends to an automorphism $f_1 \colon Z_1 \to Z_1$ with dynamical degree equal to $\lambda_{10}$. 
Denoting by $B_1$ the strict transform of $B$ on $Z_1$, we compute the canonical sheaf of $Z_1$:
\[
    \omega_{Z_1} = \beta_1^*\cO_{\IP^2}(-3) \otimes \cO_{Z_1}\bigg( \sum_{i = 1}^{10} E_i \bigg) = \cO_{Z_1}(- B_1).
\]
Let now $X_1$ be the normalization of the fiber product $X_0 \times_{\IP^2} Z_1$.
For each $i \in \{1,\ldots,10\}$, let $u_i$ be a non-zero global section of $\mathcal{O}_{Z_1}(E_i)$.

\begin{claim} \label{claim:cover}
    The map $\pi_1 \colon X_1 \to Z_1$ is the split $\alpha_{\cL_1}$-torsor associated to the section  
    \[
        s_1 = \frac{\beta_1^*(s)}{\prod_{i = 1}^{10} u_i^4} \in H^0(Z_1,\mathcal{L}_1^{\otimes 2}),
    \]
    where $\mathcal{L}_1 = \mathcal{O}_{Z_1}(2B_1)$.
    Moreover, $X_1$ is smooth everywhere except at the preimage of $p_0$.
\end{claim}
\begin{proof}[Proof of the claim]
    As above, we denote by $s$ the right-hand side of \eqref{eq:surface}. 
    Because of \eqref{eq:p4...p1}, and because $\pi$ is equivariant with respect to $f$, it suffices to check the claim in a neighbourhood of $E_1$, $E_2$ and $E_3$. We do it here for $E_1$, the computations for $E_2$ and $E_3$ being analogous. 
    
    We set $z =1$ in $s$ and pull back $s$ to
    \[
        \{ ax - by = 0 \} \subseteq \mathbb{A}^2_{x,y} \times \mathbb{P}^1_{a,b}.
    \]
    In the chart $U$ given by $b = 1$, we can solve $y = a x$. After substituting, we find
    \[
        \beta_1^*(s)|_U = x^4 \widetilde s,
    \]
    with $\widetilde s \in k[a,x]$. Note that $u_1 = x$ is a local equation of $E_1$. 
    The fibre product $X_0 \times_{\IP^2} Z_1$ is given over $U$ by the equation $w^2 = \beta_1^*(s)|_U$. By a direct computation with the Jacobian criterion, one checks that the equation
    \[
    w^2 = \widetilde s
    \]
    defines a surface with no singular points over $E_1$, and is therefore normal. Hence, it is an equation for $X_1$ over $U$. Away from the other $E_i$'s , we have $s_1 = \widetilde s$ up to a unit. Therefore, locally, we obtain a 
    split $\alpha_{\cL_1}$-torsor associated to 
    \[
        \widetilde s = \frac{\beta_1^*(s)}{u_1^4}.
    \]
    Finally, observe that $s_1$ is a section of $\beta_1^*\cO_{\IP^2}(12) \otimes \cO_{Z_1}\left( \sum_{i = 1}^{10} -4E_i \right) = \mathcal{L}_1^{\otimes 2}$.
\end{proof}

We further blow up the point in $Z_1$ above $p_0$, obtaining the surface $Z_2$. Denote by $\beta_2 \colon Z_2 \to Z_1$ the blow-up morphism. 
Let $B_2$ denote the strict transform of $B_1$ and let $E_0$ be the exceptional divisor over $p_0$. 
Then,
\begin{equation} \label{eq:omega_Y''}
    \omega_{Z_2} = \cO_{Z_2}(-B_2).
\end{equation}
    
Let $X_2$ be the normalization of the fibre product $X_1 \times_{Z_1} Z_2$. A computation analogous to the one in \Cref{claim:cover} shows that $\pi_2\colon X_2 \to Z_2$ is the split $\alpha_{\cL_2}$-torsor associated to the section 
\[
    s_2 = \frac{\beta_2^*(s_1)}{u_0^2},
\]
where $u_0$ is a non-zero global section of $\mathcal{O}_{Z_2}(E_0)$, and where $\mathcal{L}_2 = \cO_{Z_2}(2B_2 + E_0)$.
Moreover, the surface $X_2$ is smooth, and by combining \cite[Proposition 0.2.20]{CossecDolgachevLiedtke} and \eqref{eq:omega_Y''}, we obtain:
\[
    \omega_{X_2} = \pi_2^*(\omega_{Z_2} \otimes \cL_2) = \pi_2^*\cO_{Z_2}(B_2 + E_0).
\]
Observe that $B_2^2 = -2$, $E_0^2 = -1$ and $B_2.E_0=1$, since $E_0$ is the exceptional divisor over a smooth point of $B_1$. Since $\pi_2$ is finite and purely inseparable, hence a universal homeomorphism by \cite[Tags 01S2, 04DC]{stacks}, each pullback $\pi_2^*B_2$ and $\pi_2^*E_0$ is either integral or twice an integral curve. From $(\pi_2^*E_0)^2 = -2$, it follows that $R_2 \coloneqq \pi_2^*E_0$ is integral; moreover, the adjunction formula yields that $p_a(R_2)=0$, so $R_2$ is smooth and rational. 
On the other hand, the adjunction formula yields $p_a(\pi_2^*B_2)<0$, so that
\[
    \pi_2^*B_2 = 2C_2
\]
for an integral curve $C_2$ with $C_2^2 = -1$. 
Moreover, $K_{X_2}.C_2 = -1$, so $C_2$ is a $(-1)$-curve. 
We can blow down $C_2$ to obtain a smooth surface $X_3$. The image $R_3$ of $R_2$ becomes a $(-1)$-curve in $X_3$, and therefore we can blow down $R_3$ to obtain a smooth surface $\XLehmer$. Since $\omega_{X_2} = \mathcal{O}_{X_2}(2C_2+R_2)$ is supported on the exceptional configuration of the morphism $X_2\to \XLehmer$, it follows that $\XLehmer$ has a trivial canonical bundle.

Recall that blowing up a smooth point increases the second Betti number by one; hence, $b_2(Z_2) = b_2(\IP^2)+11 = 12$. As $\pi_2\colon X_2 \to Z_2$ is a 
universal homeomorphism, we have $b_2(X_2) = b_2(Z_2) = 12$ by \cite[Tag 04DY]{stacks}; hence, $b_2(\XLehmer) = 10$. In particular, $\XLehmer$ is an Enriques surface.

Finally, by \cite[Theorem 11.1]{McMullen:dynamics.blowups.proj.plane} the birational transformation $f$ of $\IP^2$ defines an automorphism of $Z_2$ with dynamical degree $\lambda_{10}$. Since $\pi_2\colon X_2\to Z_2$ is a homeomorphism, the extension $\sigma_2$ to $X_2$ of the birational transformation $\sigma_0$ of~$X_0$ in the statement has dynamical degree $\lambda_{10}$ as well. The Kodaira dimension of $\XLehmer$ is~$0$, so $\sigma_2$ descends to an automorphism $\sigmaLehmer$ of $\XLehmer$ with dynamical degree~$\lambda_{10}$.
\end{proof}

\begin{remark}
We summarize in \Cref{figure:diagram} the steps of the resolution of singularities of the double cover $X_0\to \IP^2$ in \Cref{thm: existence main}. 
In order to see that the pullbacks in $X_2$ of the exceptional divisors $E_1,\ldots,E_{10}$ in $Z_2$ are rational cuspidal curves, it suffices by \cite[Tag 0BQ4]{stacks} to show that they have arithmetic genus $1$, since $\pi_2\colon X_2\to Z_2$ is a universal homeomorphism. 
Since $\pi_2^*E_i$ has square $-2$, it is integral; moreover, $K_{X_2}.\pi_2^*E_i = 2B_2.E_i = 2$. Hence, the adjunction formula yields $p_a(\pi_2^*E_i)=1$. 
We deduce that the $10$ singular points of~$X_0$ over $p_1,\ldots,p_{10}$ (in blue in \Cref{figure:diagram}) are elliptic singularities, while the singular point over $p_0$ (in red in \Cref{figure:diagram}) is an $A_1$-singularity. 
The images in $\XLehmer$ of the $10$ rational cuspidal curves in $X_2$ are rational cuspidal curves $F_1,\ldots,F_{10}$ satisfying $F_i.F_j = 2$ for $1\le i\ne j\le 10$.
\end{remark}

\begin{figure}[h!]
\centering
\begin{tikzpicture}[scale=0.5]
\draw (-2,-2.5) rectangle (3,2.5);
\node (A) at (3,1.8) [right] {\small $\mathbb{P}^2$};
\draw[scale=0.5, domain=0:2.8, smooth, variable=\x, black, thick] plot ({\x}, {sqrt((\x)^3)});
\draw[scale=0.5, domain=0:2.8, smooth, variable=\x, black, thick] plot ({\x}, {-sqrt((\x)^3)});
\node (C) at (1.2,2) [right] {\small $B$};
\filldraw[blue] (0.23,0.15) circle[radius=2.5pt];
\filldraw[blue] (0.47,0.45) circle[radius=2.5pt];
\filldraw[blue] (0.66,0.75) circle[radius=2.5pt];
\filldraw[blue] (0.82,1.05) circle[radius=2.5pt];
\filldraw[blue] (0.97,1.35) circle[radius=2.5pt];
\filldraw[blue] (0.23,-0.15) circle[radius=2.5pt];
\filldraw[blue] (0.47,-0.45) circle[radius=2.5pt];
\filldraw[blue] (0.66,-0.75) circle[radius=2.5pt];
\filldraw[blue] (0.82,-1.05) circle[radius=2.5pt];
\filldraw[blue] (0.97,-1.35) circle[radius=2.5pt];
\filldraw[red] (1.27,-2) circle[radius=2.5pt];

\draw[->] (-3.5,0) -- (-2.5,0);

\draw (-9,-2.5) rectangle (-4,2.5);
\node (Apr) at (-4,1.8) [right] {\small $Z_1$};
\draw[scale=0.5, domain=0:2.8, smooth, variable=\x, black, thick] plot ({\x-14}, {sqrt((\x)^3)});
\draw[scale=0.5, domain=0:2.8, smooth, variable=\x, black, thick] plot ({\x-14}, {-sqrt((\x)^3)});
\node (Ct) at (-5.8,2) [right] {\small $B_1$};
\draw[blue, thick] (-8.5,0.15) -- (-4.5,0.15);

\draw[blue, thick] (-8.5,0.45) -- (-4.5,0.45);
\draw[blue, thick] (-8.5,0.75) -- (-4.5,0.75);
\draw[blue, thick] (-8.5,1.05) -- (-4.5,1.05);
\draw[blue, thick] (-8.5,1.35) -- (-4.5,1.35);
\draw[blue, thick] (-8.5,-0.15) -- (-4.5,-0.15);
\draw[blue, thick] (-8.5,-0.45) -- (-4.5,-0.45);
\draw[blue, thick] (-8.5,-0.75) -- (-4.5,-0.75);
\draw[blue, thick] (-8.5,-1.05) -- (-4.5,-1.05);
\draw[blue, thick] (-8.5,-1.35) -- (-4.5,-1.35);
\filldraw[red] (-5.74,-2) circle[radius=2.5pt];

\draw[->] (-10.5,3) -- (-9.5,2);

\draw (-16,1) rectangle (-11,6);
\node (Ase) at (-11,5.3) [right] {\small $Z_2$};
\draw[scale=0.5, domain=0:2.8, smooth, variable=\x, black, thick] plot ({\x-28}, {7+sqrt((\x)^3)});
\draw[scale=0.5, domain=0:2.8, smooth, variable=\x, black, thick] plot ({\x-28}, {7-sqrt((\x)^3)});
\node (Ct) at (-12.8,5.5) [right] {\small $B_2$};
\draw[blue, thick] (-15.5,3.65) -- (-11.5,3.65);

\draw[blue, thick] (-15.5,3.95) -- (-11.5,3.95);
\draw[blue, thick] (-15.5,4.25) -- (-11.5,4.25);
\draw[blue, thick] (-15.5,4.55) -- (-11.5,4.55);
\draw[blue, thick] (-15.5,4.85) -- (-11.5,4.85);
\draw[blue, thick] (-15.5,3.35) -- (-11.5,3.35);
\draw[blue, thick] (-15.5,3.05) -- (-11.5,3.05);
\draw[blue, thick] (-15.5,2.75) -- (-11.5,2.75);
\draw[blue, thick] (-15.5,2.45) -- (-11.5,2.45);
\draw[blue, thick] (-15.5,2.15) -- (-11.5,2.15);

\node[red] (E) at (-15,1.3) [above] {\tiny $E_0$};
\draw[red, thick] (-15.5,1.5) -- (-11.5,1.5);

\draw[->] (-16.5,3) -- (-17.5,2);

\draw (-23,-2.5) rectangle (-18,2.5);
\node (Ase) at (-23,1.8) [left] {\small $Z$};

\draw[blue, thick, xshift=-20.5cm, rotate=15] (-2,0) -- (2,0);
\draw[blue, thick, xshift=-20.5cm, rotate=15 + 360/10] (-2,0) -- (2,0);
\draw[blue, thick, xshift=-20.5cm, rotate= 15 + 2*360/10] (-2,0) -- (2,0);
\draw[blue, thick, xshift=-20.5cm, rotate= 15 + 3*360/10] (-2,0) -- (2,0);
\draw[blue, thick, xshift=-20.5cm, rotate= 15 + 4*360/10] (-2,0) -- (2,0);

\filldraw[red] (-20.5,0) circle[radius=2.5pt];

\draw (-2,4.5) rectangle (3,9.5);
\node (Apr) at (3,8.8) [right] {\small $X_0$};
\draw[scale=0.5, domain=-4.5:4.5, smooth, variable=\x, black, thick] plot ({-0.8 + 1/4 * \x * \x}, {14 + \x});
\node (Cpr) at (1.6,8.8) [right] {\small $C$};
\filldraw[blue] (-0.39,7.15) circle[radius=2.5pt];
\filldraw[blue] (-0.3,7.45) circle[radius=2.5pt];
\filldraw[blue] (-0.12,7.75) circle[radius=2.5pt];
\filldraw[blue] (0.16,8.05) circle[radius=2.5pt];
\filldraw[blue] (0.51,8.35) circle[radius=2.5pt];
\filldraw[blue] (-0.39,7-0.15) circle[radius=2.5pt];
\filldraw[blue] (-0.3,7-0.45) circle[radius=2.5pt];
\filldraw[blue] (-0.12,7-0.75) circle[radius=2.5pt];
\filldraw[blue] (0.16,7-1.05) circle[radius=2.5pt];
\filldraw[blue] (0.51,7-1.35) circle[radius=2.5pt];
\filldraw[red] (1.6,7-2) circle[radius=2.5pt];

\draw[->] (0.5,4) -- (0.5,3);

\draw (-9,4.5) rectangle (-4,9.5);
\node (Atpr) at (-4,8.8) [right] {\small $X_1$};
\draw[scale=0.5, domain=-4.5:4.5, smooth, variable=\x, black, thick] plot ({-14.8 + 1/4 * \x * \x}, {14 + \x});
\node (Ctpr) at (-5.8,9) [left] {\small $C_1$};

\filldraw[red] (-7+1.6,7-2) circle[radius=2.5pt];

\draw[scale=0.5, domain=0:1, smooth, variable=\x, blue, thick] plot ({2-14+sqrt((\x)^3 * 10)}, {1.15*2 + 14+\x});
\draw[scale=0.5, domain=0:1.4, smooth, variable=\x, blue, thick] plot ({2-14-sqrt((\x)^3 * 10)}, {1.15*2 + 14+\x});

\draw[scale=0.5, domain=0:1, smooth, variable=\x, blue, thick] plot ({2-14+sqrt((\x)^3 * 10)}, {0.85*2 + 14+\x});
\draw[scale=0.5, domain=0:1.4, smooth, variable=\x, blue, thick] plot ({2-14-sqrt((\x)^3 * 10)}, {0.85*2 + 14+\x});

\draw[scale=0.5, domain=0:1, smooth, variable=\x, blue, thick] plot ({2-14+sqrt((\x)^3 * 10)}, {0.55*2 + 14+\x});
\draw[scale=0.5, domain=0:1.4, smooth, variable=\x, blue, thick] plot ({2-14-sqrt((\x)^3 * 10)}, {0.55*2 + 14+\x});

\draw[scale=0.5, domain=0:1, smooth, variable=\x, blue, thick] plot ({2-14+sqrt((\x)^3 * 10)}, {0.25*2 + 14+\x});
\draw[scale=0.5, domain=0:1.4, smooth, variable=\x, blue, thick] plot ({2-14-sqrt((\x)^3 * 10)}, {0.25*2 + 14+\x});

\draw[scale=0.5, domain=0:1, smooth, variable=\x, blue, thick] plot ({2-14+sqrt((\x)^3 * 10)}, {-0.05*2 + 14+\x});
\draw[scale=0.5, domain=0:1.4, smooth, variable=\x, blue, thick] plot ({2-14-sqrt((\x)^3 * 10)}, {-0.05*2 + 14+\x});

\draw[scale=0.5, domain=0:1, smooth, variable=\x, blue, thick] plot ({2*1-14+sqrt((\x)^3 * 10)}, {-1.55*2 + 14+\x});
\draw[scale=0.5, domain=0:1.4, smooth, variable=\x, blue, thick] plot ({2*1-14-sqrt((\x)^3 * 10)}, {-1.55*2 + 14+\x});

\draw[scale=0.5, domain=0:1, smooth, variable=\x, blue, thick] plot ({2*1-14+sqrt((\x)^3 * 10)}, {-1.25*2 + 14+\x});
\draw[scale=0.5, domain=0:1.4, smooth, variable=\x, blue, thick] plot ({2*1-14-sqrt((\x)^3 * 10)}, {-1.25*2 + 14+\x});

\draw[scale=0.5, domain=0:1, smooth, variable=\x, blue, thick] plot ({2-14+sqrt((\x)^3 * 10)}, {-0.95*2 + 14+\x});
\draw[scale=0.5, domain=0:1.4, smooth, variable=\x, blue, thick] plot ({2-14-sqrt((\x)^3 * 10)}, {-0.95*2 + 14+\x});

\draw[scale=0.5, domain=0:1, smooth, variable=\x, blue, thick] plot ({2-14+sqrt((\x)^3 * 10)}, {-0.65*2 + 14+\x});
\draw[scale=0.5, domain=0:1.4, smooth, variable=\x, blue, thick] plot ({2-14-sqrt((\x)^3 * 10)}, {-0.65*2 + 14+\x});

\draw[scale=0.5, domain=0:1, smooth, variable=\x, blue, thick] plot ({2-14+sqrt((\x)^3 * 10)}, {-0.35*2 + 14+\x});
\draw[scale=0.5, domain=0:1.4, smooth, variable=\x, blue, thick] plot ({2-14-sqrt((\x)^3 * 10)}, {-0.35*2 + 14+\x});

\draw[->] (-6.5,4) -- (-6.5,3);
\draw[->] (-3.5,7) -- (-2.5,7);

\draw (-16,8) rectangle (-11,13);
\node (A2) at (-11,12.3) [right] {\small $X_2$};
\draw[scale=0.5, domain=-4.5:4.5, smooth, variable=\x, black, thick] plot ({-28.8 + 1/4 * \x * \x}, {21 + \x});
\node (C2) at (-12.8,12.5) [left] {\small $C_2$};

\draw[red, thick] (-15.5,8.5) -- (-11.5,8.5);
\node[red] (D) at (-15,8.4) [above] {\small $R_2$};

\draw[scale=0.5, domain=0:1, smooth, variable=\x, blue, thick] plot ({2-28+sqrt((\x)^3 * 10)}, {1.15*2-7 + 28+\x});
\draw[scale=0.5, domain=0:1.4, smooth, variable=\x, blue, thick] plot ({2-28-sqrt((\x)^3 * 10)}, {1.15*2-7 + 28+\x});

\draw[scale=0.5, domain=0:1, smooth, variable=\x, blue, thick] plot ({2-28+sqrt((\x)^3 * 10)}, {0.85*2-7 + 28+\x});
\draw[scale=0.5, domain=0:1.4, smooth, variable=\x, blue, thick] plot ({2-28-sqrt((\x)^3 * 10)}, {0.85*2-7 + 28+\x});

\draw[scale=0.5, domain=0:1, smooth, variable=\x, blue, thick] plot ({2-28+sqrt((\x)^3 * 10)}, {0.55*2-7 + 28+\x});
\draw[scale=0.5, domain=0:1.4, smooth, variable=\x, blue, thick] plot ({2-28-sqrt((\x)^3 * 10)}, {0.55*2-7 + 28+\x});

\draw[scale=0.5, domain=0:1, smooth, variable=\x, blue, thick] plot ({2-28+sqrt((\x)^3 * 10)}, {0.25*2-7 + 28+\x});
\draw[scale=0.5, domain=0:1.4, smooth, variable=\x, blue, thick] plot ({2-28-sqrt((\x)^3 * 10)}, {0.25*2-7 + 28+\x});

\draw[scale=0.5, domain=0:1, smooth, variable=\x, blue, thick] plot ({2-28+sqrt((\x)^3 * 10)}, {-0.05*2-7 + 28+\x});
\draw[scale=0.5, domain=0:1.4, smooth, variable=\x, blue, thick] plot ({2-28-sqrt((\x)^3 * 10)}, {-0.05*2-7 + 28+\x});

\draw[scale=0.5, domain=0:1, smooth, variable=\x, blue, thick] plot ({2*1-28+sqrt((\x)^3 * 10)}, {-1.55*2-7 + 28+\x});
\draw[scale=0.5, domain=0:1.4, smooth, variable=\x, blue, thick] plot ({2*1-28-sqrt((\x)^3 * 10)}, {-1.55*2-7 + 28+\x});

\draw[scale=0.5, domain=0:1, smooth, variable=\x, blue, thick] plot ({2*1-28+sqrt((\x)^3 * 10)}, {-1.25*2-7 + 28+\x});
\draw[scale=0.5, domain=0:1.4, smooth, variable=\x, blue, thick] plot ({2*1-28-sqrt((\x)^3 * 10)}, {-1.25*2-7 + 28+\x});

\draw[scale=0.5, domain=0:1, smooth, variable=\x, blue, thick] plot ({2-28+sqrt((\x)^3 * 10)}, {-0.95*2-7 + 28+\x});
\draw[scale=0.5, domain=0:1.4, smooth, variable=\x, blue, thick] plot ({2-28-sqrt((\x)^3 * 10)}, {-0.95*2-7 + 28+\x});

\draw[scale=0.5, domain=0:1, smooth, variable=\x, blue, thick] plot ({2-28+sqrt((\x)^3 * 10)}, {-0.65*2-7 + 28+\x});
\draw[scale=0.5, domain=0:1.4, smooth, variable=\x, blue, thick] plot ({2-28-sqrt((\x)^3 * 10)}, {-0.65*2-7 + 28+\x});

\draw[scale=0.5, domain=0:1, smooth, variable=\x, blue, thick] plot ({2-28+sqrt((\x)^3 * 10)}, {-0.35*2-7 + 28+\x});
\draw[scale=0.5, domain=0:1.4, smooth, variable=\x, blue, thick] plot ({2-28-sqrt((\x)^3 * 10)}, {-0.35*2-7 + 28+\x});

\draw[->] (-10.5,10) -- (-9.5,9);
\draw[->] (-16.5,10) -- (-17.5,9);
\draw[->] (-13.5,7.5) -- (-13.5,6.5);

\draw (-23,4.5) rectangle (-18,9.5);
\node (X) at (-23,8.8) [left] {\small $\XLehmer$};

\draw[->] (-20.5,4) -- (-20.5,3);

\foreach \k in {1,...,10} {
  \draw[blue, thick,
        xshift=-20.5cm,
        yshift=7cm,
        rotate= 15 + \k*360/10]
    plot[domain=-1:1, smooth, variable=\t]
      ({0.4*\t*\t - 0.4}, {\t*\t*\t - 1});
}

\filldraw[red] (-20.5,7) circle[radius=2.5pt];
\node (X) at (-20.7,7.02) [left,red] {\tiny $2$};

\end{tikzpicture}

\caption{The resolution of singularities of the surface $X_0$ in \Cref{thm: existence main}.}
\label{figure:diagram}
\end{figure}
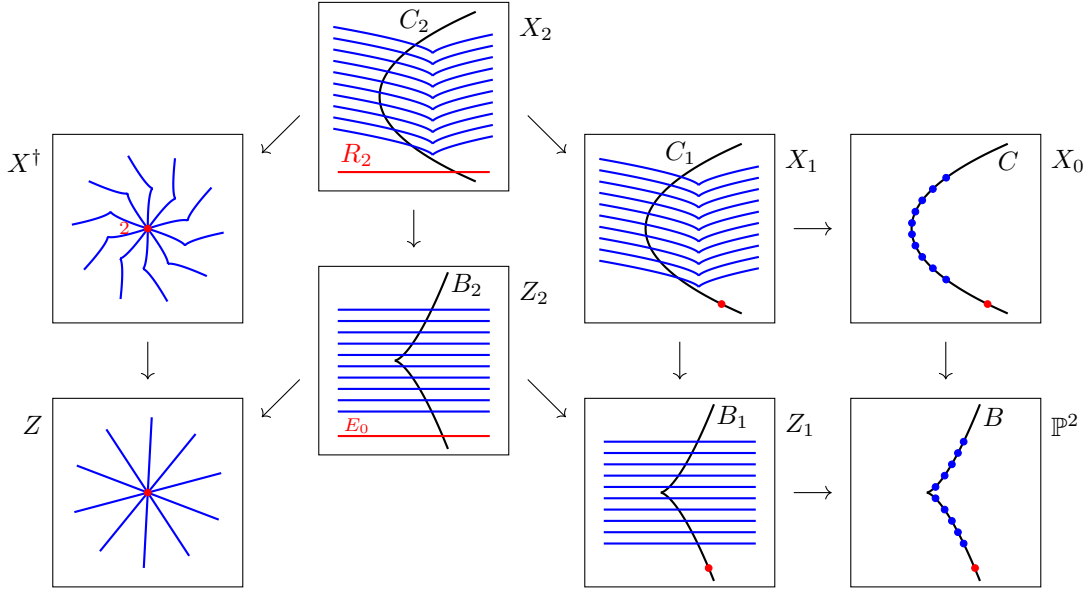

\begin{remark} \label{rmk: Giacomos remark}
The normal surface $Z$ in \Cref{figure:diagram} is the contraction of the curves $E_0$ and $B_2$ in $Z_2$. Note that such a contraction exists by \cite[Theorem 2.9]{artin}. 
As $\pi_2 \colon X_2 \to Z_2$ is an $\alpha_{\cL_2}$-torsor, where $\cL_2$ is the line bundle associated to a divisor supported on the exceptional locus of $Z_2 \to Z$, the induced map $\pi \colon \XLehmer \to Z$ is an $\alpha_2$-torsor over the complement $U = Z - \{z\}$ of the singular point $z$ of~$Z$. There is thus an $\alpha_2$-action on $\XLehmer$ that is free outside $\pi^{-1}(z)$ and such that the quotient map $\XLehmer \to \XLehmer/\alpha_2$ coincides with $\pi$ over $U$. Since $Z$ is normal and $\pi$ is $\alpha_2$-invariant, we deduce the existence of a compatible isomorphism $\XLehmer/\alpha_2 \cong Z$.

In simpler terms, this means that the double cover $X_0 \to \IP^2$ in \Cref{thm: existence main} is birationally equivalent to the quotient $Z$ of the Enriques surface $\XLehmer$ by the unique (up to scalar multiplication) regular $2$-closed derivation $D$ in $H^0(\XLehmer,T_{\XLehmer})=k$. In the coordinates of~$X_0$, we can write down this derivation as 
\[
    D = g^2 \partial_w,
\]
where $g=g(x,y,z)$ is the defining equation of the cuspidal cubic curve $B$ in \Cref{eq: equation of B}. In order to see this, we first show that 
\begin{equation} \label{eq: sigma0*D*sigma0^-1}
    \sigma_0 D \sigma_0^{-1} = \zeta^8D,
\end{equation}
where $\sigma_0$ is the birational transformation of~$X_0$ defined in \Cref{thm: existence main}. A straightforward computation shows that
\[
    \sigma_0^{-1}(x:y:z:w) = ((x+az)z:(x+az)(y+bz):(y+bz)z: \zeta^{15}\alpha w + \eta)
\]
where $a=\zeta^{29}$, $b=\zeta^6$, $\alpha = (x+az)^2(y+bz)^2z^2$ and $\eta$ is a polynomial in $x,y,z$. In order to check \eqref{eq: sigma0*D*sigma0^-1}, it suffices to work in the affine chart $z\ne 0$. Moreover, $D$ maps the subring $k\left( \frac{x}{z}, \frac{y}{z}\right)$ to $0$, so it suffices to check equality on the element $\frac{w}{z^6}$. 
Clearly, $D\left( \frac{w}{z^6} \right) = \frac{g^2}{z^6}$. 
On the other hand, one easily computes that $\sigma_0(g)=(\zeta^{12}xyz) g$, so 
\begin{equation*}
\begin{aligned}
\sigma_0 D\sigma_0^{-1}\left( \frac{w}{z^6} \right) 
    & = \sigma_0 D\left( \frac{\zeta^{15}\alpha w + \eta}{(y+bz)^6z^6} \right) 
    = \sigma_0\left( \frac{\zeta^{15}g^2(x+az)^2(y+bz)^2z^2}{(y+bz)^6z^6} \right) \\
    & = \frac{\zeta^8g^2x^2y^2z^2(xy)^2(yz)^2(xz)^2}{(yz)^6(xz)^6} 
    = \zeta^8 \frac{g^2}{z^6}.
\end{aligned}
\end{equation*}
The derivation $D$ extends to a $2$-closed derivation on $\XLehmer$ (which we also denote $D$), that is regular away from the exceptional divisors $E_1,\ldots,E_{10}$ over the elliptic singularities of~$X_0$. 
If $D$ had a pole along $E_i$, it would have a pole along $(\sigmaLehmer)^n(E_i)$ for every $n\ge 0$, by the fact that the automorphism $\sigmaLehmer$ of $\XLehmer$ normalizes $D$. This is a contradiction by \Cref{eq:p4...p1}, since $\sigmaLehmer(E_1)$ is not one of the $E_i$. 
In particular, up to scalar multiplication, $D$ is the unique regular $2$-closed derivation in $H^0(\XLehmer,T_{\XLehmer})$.
\end{remark}

\section{Uniqueness} \label{sec: uniqueness}
In this section, we establish the uniqueness result stated in \Cref{thm: uniqueness intro}. Throughout, $X$ denotes an Enriques surface over an algebraically closed field $k$.

First, we extend Oguiso's argument of \cite[Theorem 1.2]{Oguiso:third.smallest} to odd characteristic by using $2$-adic cohomology instead of singular cohomology. 
Recall that, in characteristic different from $2$, the universal \'etale cover $\pi \colon Y \to X$ is an étale double cover by a K3 surface $Y$. 

\begin{proposition} \label{prop: charneq2}
If $\operatorname{char}(k) \neq 2$, then $h(\sigma) > \log \lambda_{10}$ for all $\sigma \in \Aut(X)$.
\end{proposition}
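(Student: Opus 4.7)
The plan is to extend Oguiso's argument from the complex case \cite[Theorem 1.2]{Oguiso:third.smallest} to odd characteristic by substituting $\ell$-adic étale cohomology at $\ell = 2$ for singular cohomology. Suppose for contradiction that $\sigma \in \Aut(X)$ satisfies $h(\sigma) = \log \lambda_{10}$. Since $\lambda_{10}$ is a Salem number whose minimal polynomial $P_{10}$ has degree $10$ and $\Num(X)$ has rank $10$, the characteristic polynomial of $\sigma^{*}$ on $\Num(X)$ must be exactly $P_{10}$. Let $\pi \colon Y \to X$ denote the K3 double cover with covering involution $\iota$. The automorphism $\sigma$ lifts to some $\widetilde{\sigma} \in \Aut(Y)$; after possibly replacing $\widetilde{\sigma}$ by $\widetilde{\sigma} \circ \iota$, we may assume $\widetilde{\sigma} \iota = \iota \widetilde{\sigma}$, so that $\widetilde{\sigma}^{*}$ preserves the $\iota^{*}$-eigenspace decomposition $H^{2}(Y, \mathbb{Q}_{2}) = H^{+} \oplus H^{-}$, of dimensions $10$ and $12$ respectively.

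The pullback $\pi^{*}$ identifies $H^{2}(X, \mathbb{Q}_{2})$ with $H^{+}$, and since $b_{2}(X) = \rho(X) = 10$ the cycle class map gives $\Num(X) \otimes \mathbb{Q}_{2} \cong H^{2}(X, \mathbb{Q}_{2}(1))$; hence the characteristic polynomial of $\widetilde{\sigma}^{*}$ on $H^{+}$ is $P_{10}$. The dynamical degree of $\widetilde{\sigma}$ equals that of $\sigma$, namely $\lambda_{10}$. By the K3 analog of \cite[Lemma 3.1]{McMullen:K3} combined with the non-degeneracy of the intersection pairing — which uses only signature data and extends to odd characteristic via $2$-adic étale cohomology — the characteristic polynomial of $\widetilde{\sigma}^{*}$ on $H^{2}(Y, \mathbb{Q}_{2})$ factors as a Salem polynomial with largest root $\lambda_{10}$ times a product of cyclotomic polynomials. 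Since this Salem factor is $P_{10}$ and is already exhausted on $H^{+}$, the characteristic polynomial $C(x)$ of $\widetilde{\sigma}^{*}$ on $H^{-}$ must be a product of cyclotomic polynomials of total degree $12$.

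The decisive step, and the main obstacle, is to exclude every such $C(x)$. The list of candidates is finite since only finitely many cyclotomic polynomials $\Phi_{n}$ satisfy $\varphi(n) \le 12$. For each candidate, a signature and trace argument on the $\mathbb{Z}_{2}$-lattice $H^{-}$ — which has signature $(2, 10)$ and is isomorphic to the anti-invariant part of the K3 lattice over $\mathbb{Z}_{2}$ — delivers a contradiction, exactly as in Oguiso's complex argument. The necessary input is only the intersection pairing and the rational action of $\widetilde{\sigma}^{*}$, both available in odd characteristic, so Oguiso's exclusion transfers without modification and the contradiction yields $h(\sigma) > \log \lambda_{10}$.
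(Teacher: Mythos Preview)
Your setup through the $\iota^*$-eigenspace decomposition is reasonable, but the final paragraph is a genuine gap rather than a routine verification. You assert that for each candidate product $C(x)$ of cyclotomic polynomials of degree $12$, ``a signature and trace argument on the $\mathbb{Z}_2$-lattice $H^-$ \ldots delivers a contradiction, exactly as in Oguiso's complex argument.'' This is neither a valid argument in your setting nor an accurate description of what Oguiso does.

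First, a $\mathbb{Z}_2$-lattice has no signature; quadratic forms over $\mathbb{Q}_2$ are classified by rank, discriminant, and Hasse invariant, so ``signature $(2,10)$'' carries no meaning once you have passed to $2$-adic coefficients. More seriously, nothing you have written excludes even the trivial candidate $C(x)=(x-1)^{12}$: there is no obstruction from trace or any local invariant of $H^-$ alone to $\widetilde\sigma^*$ acting as the identity there. The step that actually gives traction, and which you never invoke, is the discriminant-group constraint. The primitive embedding $L=\pi^*\Num(X)\cong\Enriqueslattice(2)$ into the unimodular lattice $H^2_{\text{\'et}}(Y,\mathbb{Z}_2)$ forces the action on $A_L\cong\mathbb{F}_2^{10}$ to match, via the gluing, the action on the discriminant of $L^\perp$. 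Since $P_{10}\bmod 2$ factors as two irreducible quintics whose roots are primitive $31$st roots of unity, the induced automorphism $\bar\tau$ has order $31$, and hence the action of $\widetilde\sigma^*$ on the complement must have order divisible by $31$ as well. Only at this point can one start excluding possibilities, and even then the paper requires one further piece of arithmetic input unavailable from pure lattice theory: Jang's result \cite[Proposition~3.7, Remark~3.8]{Jang:Frobenius} that an automorphism of a finite-height K3 surface cannot act with order divisible by $31$ on the $2$-adic transcendental lattice $T_2$. Both ingredients are missing from your outline, and without the first your case-by-case exclusion cannot even begin.
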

\begin{proof}
Assume there exists $\sigma \in \Aut(X)$ with $h(\sigma) = \log \lambda_{10}$ and let $\pi \colon Y \to X$ be the K3 cover of $X$. Let $\tau \in \Aut(Y)$ be a lift of $\sigma$. As in \cite[Section 4]{Oguiso:third.smallest}, the pullback $\pi^*$ identifies $\Num(X) \cong {\Enriqueslattice}$ with a primitive sublattice $L \subseteq \Pic(Y)$ isometric to ${\Enriqueslattice}(2)$. Moreover, the action of $\sigma$ on $\Num(X)/2\Num(X)$ is identified via $\pi^*$ with the action of the lift $\tau$ on the discriminant group $A_L$ of $L$.

Since $\sigma$ acts on $\Num(X)$ via an isometry whose characteristic polynomial is Lehmer's polynomial $P_{10}$ \eqref{eq:Lehmers.polynomial},
the characteristic polynomial of the automorphism $\bar{\tau}$ of $A_L$ induced by $\tau$ is the reduction of $P_{10}$ modulo~$2$, which can be factorized into irreducible factors as 
\begin{equation}\label{eq:Lehmer.mod2}
(x^5+x^3+x^2+x+1)(x^5+x^4+x^3+x^2+1),
\end{equation}
see \cite[Lemma 4.3]{Oguiso:third.smallest}. Note that the roots of this polynomial are pairwise distinct primitive $31$-st roots of unity, hence $\bar\tau$ is diagonalizable over $\IF_{32}$ and ${\rm ord}(\bar{\tau}) = 31$. 

Now, set $M = L^{\perp} \subseteq \Pic(Y)$, so that $L \oplus M \subseteq \Pic(Y)$ is a finite index sublattice with both $L$ and $M$ preserved by $\tau$, and let $T_2 \coloneqq (\Pic(Y)_{\mathbb{Z}_2})^\perp \subseteq H^2_{\text{\'et}}(Y,\mathbb{Z}_2)$ be the $2$-adic transcendental lattice of $X$, so that 
$$L_{\mathbb{Z}_2} \oplus M_{\mathbb{Z}_2} \oplus T_2 \subseteq\Pic(Y)_{\mathbb{Z}_2} \oplus T_2 \subseteq H^2_{\text{\'et}}(Y,\mathbb{Z}_2)$$
is a finite index $\mathbb{Z}_2$-sublattice, again with $L_{\mathbb{Z}_2}$, $M_{\mathbb{Z}_2}$ and $T_2$ preserved by~$\tau$.
By \cite[Corollary~1.2]{EsnaultSrinivas:entropy}, the isometry $\tau|_{M_{\mathbb{Z}_2} \oplus T_2}$ has finite order. 
On the other hand, since $H^2_{\text{\'et}}(Y,\mathbb{Z}_2)$ is unimodular by Poincaré duality and $\overline{\tau}$ has order $31$, the order of $\tau|_{M_{\mathbb{Z}_2} \oplus T_2}$ must be divisible by $31$.
Since ${\rm rk}~M = \rho(Y) - 10 \leq 12$, the order of $\tau|_{M_{\mathbb{Z}_2}}$ is not divisible by $31$,
hence the order of $\tau|_{T_2}$ is. In particular, $T_2 \neq 0$, so $Y$ has finite height. Then, $31 \mid {\rm ord}(\tau|_{T_2})$ is impossible by \cite[Proposition 3.7, Remark 3.8]{Jang:Frobenius}.
\end{proof}

Thus, we can focus on the case $\operatorname{char}(k) = 2$. Here, there are three types of Enriques surfaces, distinguished by the torsion component $\Pic_X^{\tau}$ of their Picard scheme. We refer the reader to \cite[Chapter 1]{CossecDolgachevLiedtke} for an introduction to Enriques surfaces in characteristic $2$. We have $\Pic_X^{\tau} \in \{\mu_2,\mathbb{Z}/2\mathbb{Z},\alpha_2\}$ and $X$ is called \emph{ordinary}, \emph{classical}, or \emph{supersingular}, respectively.
Let $G \coloneqq \Pic_X^{\tau}$ and let $G^D \coloneqq \operatorname{Hom}(G,\mathbb{G}_m)$ be its Cartier dual. 
By \cite[Proposition~(6.2.1)]{Raynaud:picard}, there is a $G^D$-torsor $\pi \colon Y \to X$ which has the universal property that if $H$ is any finite commutative group scheme and $\pi' \colon Y' \to X$ is an $H$-torsor, then $\pi$ factors uniquely equivariantly through $\pi'$. 
In particular, every automorphism of $X$ lifts to $Y$.

In case $X$ is ordinary, the morphism $\pi \colon Y \to X$ is étale and $Y$ is a K3 surface. Using the theory of canonical lifts to characteristic $0$, this case can be excluded quickly:

\begin{proposition} \label{prop: char2ordinary}
If $\operatorname{char}(k) = 2$ and $X$ is ordinary, then $h(\sigma) > \log \lambda_{10}$ for all $\sigma \in \Aut(X)$.
\end{proposition}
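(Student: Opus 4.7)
The plan is to reduce to the characteristic-zero case handled by Oguiso in \cite[Theorem 1.2]{Oguiso:third.smallest} by lifting $(X,\sigma)$ to characteristic zero via the Serre-Tate canonical lift of the K3 cover. Suppose for contradiction that $\sigma \in \Aut(X)$ satisfies $h(\sigma) = \log \lambda_{10}$. Since $X$ is ordinary, $\pi\colon Y \to X$ is étale and $Y$ is an ordinary K3 surface. By the universal property of $\pi$ recalled before the statement of \Cref{prop: char2ordinary}, $\sigma$ admits a lift $\tau \in \Aut(Y)$. Such a $\tau$ necessarily normalizes the Enriques involution $\iota$; since $\langle\iota\rangle \cong \mathbb{Z}/2\mathbb{Z}$ is cyclic of prime order, normalization is equivalent to commutation, so $\tau\iota = \iota\tau$.

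Next, I invoke Serre-Tate theory: the ordinary K3 surface $Y$ admits a canonical lift $\mathcal{Y}$ over $W(k)$, and the restriction map $\Aut(\mathcal{Y}) \to \Aut(Y)$ is an isomorphism. Let $\iota_W, \tau_W \in \Aut(\mathcal{Y})$ be the canonical lifts of $\iota$ and $\tau$; by functoriality they still commute. Because $\iota$ is fixed-point free on $Y$ (as $Y \to X$ is étale), and since any closed subscheme of $\mathcal{Y}$ disjoint from the special fiber is empty, the involution $\iota_W$ is also fixed-point free. Hence the quotient $\mathcal{X} \coloneqq \mathcal{Y}/\iota_W$ is a smooth proper lift of $X$ to $W(k)$, and $\tau_W$ descends to an automorphism $\sigma_W \in \Aut(\mathcal{X})$ lifting $\sigma$.

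Let $K = \operatorname{Frac} W(k)$. The specialization map $\Num(\mathcal{X}_{\overline{K}}) \to \Num(X)$ is injective, equivariant with respect to $\sigma_W$ and $\sigma$, and preserves the intersection form. Since $\mathcal{X}_{\overline{K}}$ and $X$ are both Enriques surfaces, both lattices are isomorphic to $\Enriqueslattice$ and have rank $10$, so this specialization is an isomorphism. Consequently the characteristic polynomials of $\sigma_W$ on $\Num(\mathcal{X}_{\overline{K}})$ and of $\sigma$ on $\Num(X)$ coincide, giving $h(\sigma_W) = \log \lambda_{10}$. After choosing an embedding $\overline{K} \hookrightarrow \mathbb{C}$, this contradicts \cite[Theorem 1.2]{Oguiso:third.smallest}.

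The main subtlety is the functoriality of the canonical lift for \emph{all} of $\Aut(Y)$, which rests on the characterization of $\mathcal{Y}$ by the crystalline data (equivalently, by the splitting of the Hodge filtration arising from the ordinarity of $Y$) that is preserved under any automorphism of $Y$. Precisely this feature renders the ordinary case amenable to a lifting argument in characteristic $2$, whereas the $\ell$-adic approach of \Cref{prop: charneq2} would break down at $\ell = p = 2$.
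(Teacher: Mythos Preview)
Your argument is correct and follows essentially the same route as the paper: reduce to Oguiso's theorem by lifting the pair $(X,\sigma)$ to characteristic zero via the Serre--Tate canonical lift of the ordinary K3 cover. The paper compresses your unpacked argument into a single citation of \cite[Theorem~4.11]{Srivastava}, which provides exactly the compatible lift of $X$, $Y$, $\sigma$, and $\tau$ that you construct by hand; the only point you gloss over is the justification that $Y$ is ordinary when $X$ is, for which the paper cites \cite[Theorem~2.7]{Crew}.
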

\begin{proof}
Let $\sigma \in \Aut(X)$ and $\pi \colon Y \to X$ the K3 cover of $X$. Let $\tau \in \Aut(Y)$ be a lift of $\sigma$. Since $X$ is ordinary, so is $Y$ by \cite[Theorem~2.7]{Crew}. By \cite[Theorem~4.11]{Srivastava}, we can lift $X,Y,\tau,$ and $\sigma$ compatibly to characteristic $0$, so the statement follows from \cite[Theorem~1.2]{Oguiso:third.smallest}.
\end{proof}

If $X$ is not ordinary, then $\pi \colon Y \to X$ is purely inseparable and $h^0(X,\Omega_X^1) = 1$. The surface $Y$ is \emph{K3-like} in the sense that it is integral and Gorenstein with $\omega_Y \cong \mathcal{O}_Y$ and $h^1(Y,\mathcal{O}_Y) = 0$, but it is always singular. There are the following three possibilities for the shape of the singularities of $Y$, see \cite[Theorem~1.3.5]{CossecDolgachevLiedtke}, \cite[Theorem~14.1]{Schroeer:Enriques.normal.K3.coverings}, and \cite[Theorem~1.4]{Matsumoto}:
\begin{enumerate}
\item[(A)] The surface $Y$ is not normal. In this case, the image of the non-normal locus of $Y$ is the support of the \emph{bi-conductrix} $B$, which is the divisorial part of the zero locus of a non-zero global $1$-form $\omega \in H^0(X,\Omega_X^1)$. The divisor $B$ is a sum of $(-2)$-curves.
\item[(B)] The surface $Y$ is normal and has only rational double point singularities. In this case, the minimal resolution $\widetilde{Y}$ of $Y$ is a supersingular K3 surface.
\item[(C)] The surface $Y$ is normal and has a unique isolated singularity formally isomorphic to the elliptic double point $k \llbracket x,y,z \rrbracket/(z^2 + x^3 + y^7)$.
\end{enumerate}

Thus, our goal is to show that, in cases (A) and (B), Lehmer's number is not attained as dynamical degree, while in case (C) it exists on a unique Enriques surface. First, we observe that a non-empty bi-conductrix puts constraints on dynamical degrees:

\begin{proposition} \label{prop: char2nonnormal}
If $\operatorname{char}(k) = 2$, $X$ is not ordinary, and the K3-cover $\pi \colon Y \to X$ of $X$ is not normal, then $h(\sigma) > \log\lambda_{10}$ for all $\sigma \in \Aut(X)$.
\end{proposition}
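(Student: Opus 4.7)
The plan is to suppose for contradiction that some $\sigma \in \Aut(X)$ satisfies $h(\sigma) = \log \lambda_{10}$ and derive an incompatibility from the existence of the bi-conductrix. Because $\rho(X) = 10$ and Lehmer's polynomial $P_{10}$ is irreducible of degree $10$ over $\mathbb{Q}$, the characteristic polynomial of $\sigma^*$ on $\Num(X)$ is forced to equal $P_{10}$: indeed, $P_{10}$ must divide it (since $\lambda_{10}$ is an eigenvalue and $P_{10}$ is its minimal polynomial) and both are monic of degree $10$. A direct evaluation gives $P_{10}(1) = -1 \neq 0$, so $1$ is not an eigenvalue of $\sigma^*$, and in particular $\sigma^*$ admits no non-zero fixed vector on $\Num(X)_{\mathbb{Q}}$. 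The rest of the proof will exhibit one, yielding the contradiction.

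To produce such a fixed class, I use the hypothesis that $\operatorname{char}(k) = 2$ and $X$ is not ordinary, so that $H^0(X,\Omega_X^1)$ is one-dimensional. Consequently, the non-zero global $1$-form $\omega$ whose divisorial zero locus is the bi-conductrix $B$ is unique up to scalar, and pulling back by $\sigma$ gives $\sigma^*\omega = c\,\omega$ for some $c \in k^{\times}$. Since the divisor of zeros of $\sigma^*\omega$ coincides with $\sigma^{-1}$ of that of $\omega$, this forces $\sigma$ to preserve the zero scheme of $\omega$ and in particular its divisorial part $B$, as a Weil divisor. By case~(A) of the trichotomy recalled immediately before the proposition, the non-normality of $Y$ ensures that $B$ is a non-empty effective sum of $(-2)$-curves; in particular $B \cdot H > 0$ for any ample $H$, so $[B] \neq 0$ in $\Num(X)$.

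Combining these two ingredients, $[B]$ is a non-zero $\sigma^*$-invariant class in $\Num(X)_{\mathbb{Q}}$, contradicting the first paragraph and finishing the proof. The only step requiring any care is the deduction $\sigma^* B = B$ from $\sigma^*\omega = c\,\omega$, which is the standard identification of the zero divisor of a pulled-back $1$-form under an automorphism with the preimage of the original zero divisor. Beyond this, the argument reduces to the elementary evaluation $P_{10}(1) = -1$ together with the one-dimensionality of $H^0(X,\Omega_X^1)$ provided by the non-ordinary hypothesis, so there is no substantive obstacle: the bi-conductrix, being an intrinsically defined divisor constructed from the essentially unique global $1$-form, is automatically $\sigma$-invariant, and Lehmer's polynomial leaves no room in $\Num(X)_{\mathbb{Q}}$ for such an invariant class.
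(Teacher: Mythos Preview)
Your argument is correct and follows essentially the same approach as the paper: both exploit that the bi-conductrix $B$ is intrinsically defined (via the one-dimensional $H^0(X,\Omega_X^1)$) and hence $\Aut(X)$-invariant, which is incompatible with $\sigma^*$ having the irreducible characteristic polynomial $P_{10}$. The only cosmetic difference is that the paper phrases the contradiction via the $\sigma^*$-stable sublattice $L \oplus L^\perp$ (spanned by the components of $B$ and its orthogonal) forcing the characteristic polynomial to factor, whereas you use the fixed class $[B]$ directly together with $P_{10}(1) = -1 \neq 0$.
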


\begin{proof}
Let $L \subseteq \Num(X)$ be the sublattice spanned by the components of the bi-conductrix $B$. Since $B$ is non-empty, $L$ is non-trivial. Any $\sigma \in \Aut(X)$ preserves the decomposition of the sublattice $L \oplus L^{\perp} \subseteq \Num(X)$, hence the characteristic polynomial of $\sigma$ cannot be irreducible. Since $\lambda_{10}$ has degree $10$, we conclude that $h(\sigma) \neq \log \lambda_{10}$.
\end{proof}

In the case where $Y$ is normal and its minimal resolution $\widetilde{Y}$ is a supersingular K3 surface, we want to mimic the argument of \Cref{prop: charneq2}. There, we compared the unimodularity of the second cohomology group with our knowledge of the action of $\sigma$ on $\Num(X)/2\Num(X)$. To do this in characteristic $2$, we need to work with crystalline cohomology.

\begin{proposition} \label{prop: char2normal}
If $\operatorname{char}(k) = 2$, $X$ is not ordinary, and the K3-cover $\pi \colon Y \to X$ of $X$ is normal with only rational double points as singularities, then $h(\sigma) > \log \lambda_{10}$ for all $\sigma \in \Aut(X)$.
\end{proposition}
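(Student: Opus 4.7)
I follow the strategy of \Cref{prop: charneq2}, replacing $2$-adic étale cohomology (which is ill-behaved in characteristic $2$) with crystalline cohomology. Using the universal property of the $G^D$-torsor $\pi \colon Y \to X$, I lift $\sigma$ to $\tau \in \Aut(Y)$, and then to an automorphism $\widetilde{\tau}$ of the minimal resolution $\widetilde{Y}$ of $Y$, which is a smooth supersingular K3 surface by hypothesis. Pullback of divisors along $\widetilde{Y} \to Y \to X$ identifies $\Num(X) \cong \Enriqueslattice$ with a sublattice $L \subseteq \Pic(\widetilde{Y})$ isometric to $\Enriqueslattice(2)$, preserved by $\widetilde{\tau}$ with characteristic polynomial $P_{10}$. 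The factorization \eqref{eq:Lehmer.mod2} then shows that the induced action of $\widetilde{\tau}$ on the discriminant group $A_L = L^*/L \cong (\mathbb{Z}/2\mathbb{Z})^{10}$ has order~$31$, exactly as in \Cref{prop: charneq2}.

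The crystalline cohomology $H^2_{\mathrm{cris}}(\widetilde{Y}/W)$, with $W = W(k)$ and $K = \mathrm{Frac}(W)$, is a free $W$-module of rank~$22$ equipped with a unimodular cup product pairing on which $\widetilde{\tau}$ acts as a $W$-linear isometry. Since $\widetilde{Y}$ is supersingular, the crystalline cycle class map induces an isomorphism $\Pic(\widetilde{Y}) \otimes_\mathbb{Z} K \xrightarrow{\sim} H^2_{\mathrm{cris}}(\widetilde{Y}/W) \otimes_W K$ of $\widetilde{\tau}$-modules. Setting $N \coloneqq L^\perp \subseteq H^2_{\mathrm{cris}}(\widetilde{Y}/W)$, a free $W$-module of rank~$12$, this isomorphism identifies $N \otimes_W K$ with $(L^\perp \cap \Pic(\widetilde{Y})) \otimes_\mathbb{Z} K$. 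Since $\Pic(\widetilde{Y})$ has signature $(1,21)$ and $L$ has signature $(1,9)$, the lattice $L^\perp \cap \Pic(\widetilde{Y})$ is negative definite of rank $12$, so its isometry group is finite. Consequently, $\widetilde{\tau}|_N$ has finite order.

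The standard discriminant-form glue argument for unimodular lattices -- applied to $H^2_{\mathrm{cris}}(\widetilde{Y}/W)$ together with the orthogonal sublattices $L \otimes W$ and $N$ -- yields a $\widetilde{\tau}$-equivariant anti-isomorphism between the discriminant forms $A_{L \otimes W} \cong A_L \otimes_\mathbb{Z} W$ and $A_N$. In particular, the action of $\widetilde{\tau}$ on $A_N$ has order $31$, so $31$ divides the order of $\widetilde{\tau}|_N$. The characteristic polynomial of $\widetilde{\tau}|_N$ over $K$ is then divisible by the $31$st cyclotomic polynomial $\Phi_{31}$, whose degree is $\varphi(31) = 30 > 12 = \mathrm{rk}_W N$, a contradiction.

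The main technical obstacle is ensuring that the passage $\Pic(\widetilde{Y}) \otimes W \hookrightarrow H^2_{\mathrm{cris}}(\widetilde{Y}/W)$ and the ensuing discriminant-form comparison work cleanly, since the cycle class map is not surjective (its cokernel is torsion, controlled by the Artin invariant of $\widetilde{Y}$); one may need to replace $L$ by its saturation in $H^2_{\mathrm{cris}}(\widetilde{Y}/W)$ before computing $N$ and $A_N$, and verify that the resulting order of $\widetilde{\tau}$ on $A_N$ is still exactly $31$ rather than a proper divisor.
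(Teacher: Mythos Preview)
Your overall strategy---lift to the supersingular K3 resolution $\widetilde{Y}$ and exploit the unimodularity of $H^2_{\mathrm{cris}}(\widetilde{Y}/W)$---is the same as the paper's, but the glue step as you have written it does not go through, and the obstacle you flag in your final paragraph is fatal rather than merely technical.

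The discriminant-form glue argument only produces an anti-isomorphism $A_{\overline{L}} \cong A_N$ where $\overline{L}$ is the \emph{saturation} of $L\otimes W$ inside $H^2_{\mathrm{cris}}(\widetilde{Y}/W)$ and $N = \overline{L}^{\perp}$. There is no reason for $L\otimes W$ to be saturated: the cycle class map $\Pic(\widetilde{Y})\otimes W \hookrightarrow H^2_{\mathrm{cris}}(\widetilde{Y}/W)$ has $2$-torsion cokernel governed by the Artin invariant, and this torsion can (and does) interfere. In fact, the paper's argument shows that the saturation $\overline{L_W}$ is \emph{unimodular}: since $\widetilde{\tau}$ has finite order coprime to $31$ on the rank-$12$ negative-definite orthogonal $M$ of $L$ in $\Pic(\widetilde{Y})$, after passing to a suitable power one may assume $\widetilde{\tau}$ acts trivially on $A_{\overline{M_W}}$, hence trivially on $A_{\overline{L_W}}$ via the glue map; but $\widetilde{\tau}$ acts on $A_{L_W}\cong k^{10}$ with ten distinct eigenvalues of order $31$, so the only subquotient on which it acts trivially is zero. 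Thus $A_{\overline{L_W}} = 0 = A_N$, and your intended contradiction (``$31$ divides $\mathrm{ord}(\widetilde{\tau}|_N)$, forcing $\Phi_{31}$ to divide a degree-$12$ polynomial'') evaporates.

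The paper extracts the contradiction from a different place. It uses that the orthogonal $M$ of $L$ in $\Pic(\widetilde{Y})$ is a specific lattice (an index-$2$ overlattice of $A_1^{12}$, coming from the twelve exceptional curves of the resolution), and that the isotropic subspace of $A_{M_W}$ cut out by $H^2_{\mathrm{cris}}$ is $\mathbb{F}_2$-rational. Unimodularity of $\overline{L_W}$ then forces this subspace to be maximal isotropic of dimension $5$, which would yield a unimodular overlattice of $A_1^{12}\otimes\mathbb{Z}_2$; no such overlattice exists. So the actual argument needs both the explicit identification of $M$ and a rationality statement from the theory of supersingular K3 crystals, neither of which appears in your sketch.
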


\begin{proof}
Assume there exists $\sigma \in \Aut(X)$ with $h(\sigma) = \log \lambda_{10}$.
Let $\gamma \colon \widetilde{Y} \to Y$ be the minimal resolution of $Y$, let $\widetilde{\pi} = \pi \circ \gamma$, and let $\tau$ be the lift of $\sigma$ to $\widetilde{Y}$. By \cite[Lemma 6.6]{Ekedahl.Hyland.Shepherd-Barron} and since $\gamma$ is a composition of blow-ups in closed points, we have 
\[
    L \coloneqq \Enriqueslattice(2) \cong \widetilde{\pi}^* \Pic(X) = \gamma^* \Pic(Y) \subseteq \Pic(\widetilde{Y})
\]
By \cite[5.21.4]{Illusie} and \cite[§6, first paragraph]{Ekedahl.Hyland.Shepherd-Barron}, we have a $W$-sublattice of finite index
\[
    N_W \coloneqq L_W \oplus M_W \subseteq \Pic(\widetilde{Y})_W \subseteq H^2_{\mathrm{\mathrm{cris}}}(\widetilde{Y}/W),
\]
where $M$ is the index two overlattice of $A_1^{12}$ obtained by adjoining $\frac{1}{2}(v_1+\ldots+v_{12})$.
The automorphism $\tau$ preserves $L_W$ and the saturation of $M_W$ in $H^2_{\mathrm{\mathrm{cris}}}(\widetilde{Y}/W)$.
By Poincaré duality, the $W$-lattice $H^2_{\mathrm{\mathrm{cris}}}(\widetilde{Y}/W)$ is unimodular.

Now, denote by $A_{N_W}, A_{L_W}$ and $A_{M_W}$ the discriminant groups of $N_W, L_W$, and $M_W$, respectively. Since $N$, $L$, and $M$ are $2$-elementary lattices with discriminant $2^{20},2^{10}$ and~$2^{10}$, respectively, these discriminant groups are $k$-vector spaces of dimension $20$, $10$, and~$10$. Since $H^2_{\mathrm{cris}}(\widetilde{Y}/W)$ is unimodular, the cokernel $V$ of the inclusion $N_W \subseteq H^2_{\mathrm{cris}}(\widetilde{Y}/W)$ is a maximal isotropic $k$-subspace of dimension $10$ of $A_{N_W}$.
Since $L_W$ and $M_W$ glue to a unimodular lattice, there is an anti-isometry $\varphi \colon A_{L_W} \to A_{M_W}$ that is an isomorphism at the level of $k$-vector spaces. Denote by $\pi_L$ and $\pi_M$ the two projections from $A_{N_W}$ to $A_{L_W}$ and~$A_{M_W}$, respectively. If $\overline{L_W}$ and $\overline{M_W}$ denote the saturations of $L_W$ and $M_W$ in $H^2_{\mathrm{cris}}(\widetilde{Y}/W)$, respectively, we have a sequence of inclusions
$$L_W \oplus M_W \subseteq \overline{L_W}\oplus \overline{M_W} \subseteq H^2_{\mathrm{cris}}(\widetilde{Y}/W).$$
Moreover, the two saturations induce subgroups $V_L$ and $V_M$ of $A_{L_W}$ and $A_{M_W}$, such that $V_L\subseteq \pi_L(V)$, $V_M\subseteq \pi_M(V)$ and $\varphi(V_L)=V_M$. Observe that $V_L \ne 0$ (or equivalently $V_M\ne 0$) if and only if $\pi_L|_V$ (or $\pi_M|_V$) is injective, or equivalently an isomorphism.

We repeat the previous considerations for $\overline{N_W} \coloneqq \overline{L_W}\oplus \overline{M_W}$: we denote by $A_{\overline{L_W}}$ and $A_{\overline{M_W}}$ the discriminant groups of $\overline{L_W}$ and $\overline{M_W}$, by $\overline{V}\subseteq A_{\overline{N_W}}$ the isotropic subgroup corresponding to the inclusion $\overline{N_W}\hookrightarrow H^2_{\mathrm{cris}}(\widetilde{Y}/W)$, and by $\overline{\pi}_L$ and $\overline{\pi}_M$ the two projections from $A_{\overline{N_W}}$ to $A_{\overline{L_W}}$ and $A_{\overline{M_W}}$. 
By construction, the two restrictions $\overline{\pi}_L|_{\overline{V}}$ and $\overline{\pi}_M|_{\overline{V}}$ are injective, and are therefore isomorphisms. 
Since $\tau$ preserves $L_W$ and $\overline{M_W}$, we deduce that the isomorphism of $k$-vector spaces $\overline{\varphi} \colon A_{\overline{L_W}} \to A_{\overline{M_W}}$ commutes with $\tau$. 
However, since ${\rm rk} ~M = 12$ and $M$ is negative definite, the automorphism $\tau$ acts on $M$ with finite order coprime to $31$, so up to replacing $\sigma$ with $\sigma^a$, with $\gcd(a,31)=1$, we may assume that $\tau$ acts trivially on $M$, and thus on $A_{\overline{M_W}}$. 
Consequently, $\tau$ acts trivially on $A_{\overline{L_W}} \subseteq A_{L_W}/V_L$. 
Since $\tau$ preserves the subspace $V_L$ of $A_{L_W}$, it follows that $A_{\overline{L_W}}$ lifts to a subspace of $A_{L_W}$ over which $\tau$ acts as the identity. 
However, the action of $\tau$ on $A_{L_W}$ can be diagonalized with eigenvalues distinct roots of unity of order $31$, since the same is true for the action of $\sigma$ on the discriminant group of$L$, which has characteristic polynomial as in \eqref{eq:Lehmer.mod2}. 
Therefore, the discriminant group $A_{\overline{L_W}}$ is trivial. It follows that $\overline{L_W}$ (and thus $\overline{M_W}$) are unimodular, and therefore $V_M = \pi_M(V)$ is an isotropic subspace of $A_{M_W}$ of maximal dimension $5$.
However, it follows from \cite[Lemma~9.3.(1)]{Ekedahl.Hyland.Shepherd-Barron} that the subspace $\pi_M(V)$ of $A_{M_W}$ is $\IF_2$-rational, that is, it is the base change to $k$ of a subgroup $V_{\IF_2}\subseteq A_{M\otimes \IZ_2}$, which therefore is isotropic of maximal dimension $5$.
This is a contradiction, because $V_{\IF_2}$ would induce a unimodular overlattice of $A_1^{12}\otimes \IZ_2$, which does not exist by \cite[Theorem~3.6.2]{nikulin}.
\end{proof}

It remains to study the case where the canonical cover $\pi \colon Y \to X$ is a normal rational surface.
We will need the following lattice-theoretical uniqueness result:

\begin{lemma} \label{lem: E10}
There is a unique conjugacy class of isometries in ${\rm O}(\Enriqueslattice)$ with characteristic polynomial $P_{10}$ (\ref{eq:Lehmers.polynomial}).
\end{lemma}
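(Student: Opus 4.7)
The plan is to invoke the Gross--McMullen classification of $P(x)$-lattices from \cite{gross.mcmullen, McMullen:K3}, as suggested to us by Brandhorst. Conjugacy classes of isometries of $\Enriqueslattice$ with characteristic polynomial $P_{10}$ are in bijection with isomorphism classes of pairs $(L, \sigma)$ with $L \cong \Enriqueslattice$ and $\sigma$ having characteristic polynomial $P_{10}$. Since $P_{10}$ is irreducible of degree $10 = \mathrm{rk}(\Enriqueslattice)$, such a $\sigma$ turns $L$ into a torsion-free rank-one module over the order $R = \mathbb{Z}[x, x^{-1}]/(P_{10}(x))$, that is, a fractional ideal $\mathfrak{a}$ of $R$ inside the Salem field $K = \mathbb{Q}[x]/(P_{10}(x))$. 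Compatibility with the bilinear form forces the latter to take the shape $\langle a, b \rangle = \mathrm{Tr}_{K/\mathbb{Q}}(\lambda\, a\, \bar{b})$ for some $\lambda \in K^\times$ satisfying $\bar{\lambda} = -\lambda$, where $\bar{\phantom{x}}$ denotes the involution of $K$ induced by $x \mapsto x^{-1}$.

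Next, I would apply the classification. By \cite{gross.mcmullen, McMullen:K3}, the set of pairs $(\mathfrak{a}, \lambda)$ as above which produce an even unimodular lattice of signature $(1, 9)$, modulo the natural equivalence, is a torsor under a finite abelian group $\mathcal{M}(P_{10})$ assembled via class field theory from the Picard group of $R$, the unit group $R^\times$ modulo norms from $K^\times$ to $(K^+)^\times$ (where $K^+ = \mathbb{Q}[x + x^{-1}]$ is the totally real subfield of index two), and local contributions at each finite place where the trace pairing can fail to be unimodular, most notably at the prime $2$ in view of the factorization~\eqref{eq:Lehmer.mod2}. Existence of at least one such pair is not at stake: it is automatic from \Cref{thm: existence intro}, since $(\sigmaLehmer)^*$ acts on $\mathrm{Num}(\XLehmer) \cong \Enriqueslattice$ with spectral radius $\lambda_{10}$, and since $P_{10}$ is irreducible of degree equal to the rank, the characteristic polynomial of this action must coincide with $P_{10}$.

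The main obstacle is the concrete verification that $\mathcal{M}(P_{10})$ is trivial. This reduces to an explicit arithmetic computation in Lehmer's Salem field $K$: one needs the (narrow) class numbers of $K$ and $K^+$, the structure of the unit group $R^\times$, the image of the relative norm $N_{K/K^+}$, and the local data at the finite primes dividing the discriminant of the trace form. For Lehmer's polynomial, all of these are either recorded in the literature or readily accessible via a standard computer algebra system, and the local structure at $2$ is read off from \eqref{eq:Lehmer.mod2}. Assembling them in the Gross--McMullen exact sequence will then yield triviality of $\mathcal{M}(P_{10})$, and hence the claimed uniqueness of the conjugacy class.
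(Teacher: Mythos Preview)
Your approach via the Gross--McMullen theory of $P(x)$-lattices is precisely the paper's. Two corrections are in order, though. First, for the trace form $\langle a,b\rangle = \mathrm{Tr}_{K/\mathbb{Q}}(\lambda\, a\,\bar b)$ to be symmetric you need $\bar\lambda = \lambda$ (so $\lambda \in K^+$), not $\bar\lambda = -\lambda$. Second, and more substantively, your emphasis on local contributions at the prime $2$ via the factorization~\eqref{eq:Lehmer.mod2} is a red herring: since $|P_{10}(\pm 1)| = 1$, the quadratic extension $K/K^+$ is unramified at every finite place by \cite[Proposition~3.1]{gross.mcmullen}, so there is no local input from $2$ or any other finite prime in the classification of unimodular twists. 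The paper's argument pivots instead on the four real places of $K^+$ that ramify in $K$: because the class number of $K$ is $1$, a Hilbert~90 argument shows that $\mathcal{O}_{K^+}^\times / N_{K/K^+}(\mathcal{O}_K^\times)$ injects into $\{\pm 1\}^4$ via signs at those four places, and the signature constraint $(1,9)$ then pins down a unique sign vector, hence a unique unimodular twist of the principal lattice $L_0 \cong U^{\oplus 5}$.
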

\begin{proof}
In McMullen's notation \cite[§5]{mcmullen.k3.minimum}, it suffices to show that there exists a unique unimodular $P_{10}(x)$-lattice of signature $(1,9)$ up to isometry. Let $K\coloneqq \IQ[x]/(P_{10}(x))$ and $k\coloneqq \IQ[x]/(R_{10}(x))$, where $R_{10}$ is the unique polynomial of degree $5$ such that 
\[
x^5 R_{10}(x + x^{-1})=P_{10}(x)
\]
(cf. \cite[p.~194]{mcmullen.k3.minimum}). 
Observe that $k$ is a totally real field, since all roots $t_0,\ldots,t_4$ of $R_{10}$ are real. 
Among these, only one root, say \(t_0\), is greater than $2$. Lehmer's polynomial $P_{10}$ has exactly two real roots, namely $\lambda_{10}$ and $\lambda_{10}^{-1}$, which satisfy $\lambda_{10} + \lambda_{10}^{-1} = t_0$. It follows that the four Archimedean places $v_1,\ldots,v_4$ of $k$ corresponding to $t_1,\ldots,t_4$ ramify in $K$. Since the extension $K/k$ of degree \(2\) is unramified at all finite places (see, e.g., \cite[Proposition~3.1]{gross.mcmullen}) and the class number of \(K\) is \(1\), we have an exact sequence
\[
\mathcal{O}_K^\times \xlongrightarrow{N_{K/k}} \mathcal{O}_k^\times \xlongrightarrow{A} \{\pm 1\}^4,
\]
where $N_{K/k}$ is the norm map and 
\[
A(u) \coloneqq (\operatorname{sgn}(u(t_1)),\ldots,\operatorname{sgn}(u(t_4))),
\]
where we are viewing $u\in \mathcal{O}_k^\times$ as a polynomial in $k$. 
Indeed, for a unit $u\in \mathcal{O}_k^\times$, the sign $\operatorname{sgn}(u(t_i))\in \{\pm 1\}$ is the local norm residue symbol at the Archimedean place $v_i$ corresponding to $t_i$ (cf. \cite[Theorem~V.1.3]{neukirch}). 
Since the extension $K/k$ is ramified only at $v_1,\ldots,v_4$, it follows that $A(u) = (1,1,1,1)$ if and only if $u\in N_{K/k}(K^\times)$ (see \cite[Corollary~VI.5.8]{neukirch}).
Assume that $u=N_{K/k}(\bar{u})$ for some $\bar{u}\in K^\times$. 
Since $K$ has class number~$1$, the fractional ideal $(\bar{u})$ can be written by Hilbert 90 as 
\[
(\bar{u}) = \left(u'\cdot \chi(u')^{-1}\right)
\]
for some $u'\in K^\times$, where $\chi$ is the generator of $\operatorname{Gal}(K/k)$.
Hence, $\bar{u} = \bar{u}' \cdot u'\cdot \chi(u')^{-1}$ for some $\bar{u}'\in \mathcal{O}_K^\times$. Taking norms, we obtain $u = N_{K/k}(\bar{u}) = N_{K/k}(\bar{u}')$, as desired. 

By \cite[Theorem~5.2]{mcmullen.k3.minimum}, any $P_{10}(x)$-lattice is a twist $L_0(u)$ of the principal lattice $L_0$, which is isometric to $U^{\oplus 5}$ by \cite[Theorem~8.5]{McMullen:K3}. 
Assume that $L_0(u)$ is unimodular. Then $u\in \mathcal{O}_k$ is a unit, and two twists $L_0(u)$, $L_0(u')$ are isometric whenever $u^{-1}u'\in N_{K/k}(\mathcal{O}_K^\times)$ \cite[p.~192]{mcmullen.k3.minimum}. 
In particular, the tuple 
\[
(\varepsilon_1,\ldots,\varepsilon_4) = A(u)\in \{\pm 1 \}^4
\]
determines the isometry class of $L_0(u)$.

Since exactly two of the values $R_{10}'(t_1),\ldots,R_{10}'(t_4)$ are positive, say $R_{10}'(t_1)$ and $R_{10}'(t_2)$, \cite[Theorem~8.3]{McMullen:K3} implies that the signature of $L_0(u)$ is $(1,9)$ if and only if $u(t_1),u(t_2)<0$ and $u(t_3),u(t_4)>0$. 
Thus, the only units $u\in \mathcal{O}_k^\times$ that yield a twist $L_0(u)$ of signature $(1,9)$ are those satisfying 
\[
A(u) = (-1,-1,1,1). 
\]
Therefore, every such twist $L_0(u)$ is the unique unimodular $P_{10}(x)$-lattice of signature $(1,9)$ up to isometry.
\end{proof}

In \Cref{sec: existence}, we gave an example of a blow-up $Z_1$ of $\mathbb{P}^2$ in $10$ points lying on a cuspidal cubic curve with an automorphism of dynamical degree $\lambda_{10}$. The next result says that this is the unique such surface:

\begin{theorem} \label{lem: uniqueness}
Let ${\rm char}(k) = 2$.
Let $\widetilde{Y}$ be a blow-up of $\mathbb{P}^2$ at $10$ distinct points such that $|{-K_{\widetilde{Y}}}| = \{E\}$, where $E$ is an irreducible cuspidal curve of genus $1$. Assume that there exists an automorphism $\tau \in \Aut(\widetilde{Y})$ with $h(\tau) = {\rm log}~\lambda_{10}$. Then, the following hold:
\begin{enumerate}
\item The surface $\widetilde{Y}$ contains no $(-2)$-curves.
\item The group $\Aut(\widetilde{Y})$ is an extension of $\mathbb{Z}/31\mathbb{Z}$ by $W_{\Enriqueslattice}(2)$.
\item The surface $\widetilde{Y}$ can be defined over $\mathbb{F}_2$.
\item There is an isomorphism $\widetilde{Y} \cong Z_1$, where $Z_1$ is as in \Cref{figure:diagram}.
\item The conjugacy class of $\tau \in \Aut(\widetilde{Y})$ acts on $\Pic^0(E) \cong k$ as multiplication by a root of Lehmer's polynomial $P_{10}$ \eqref{eq:Lehmers.polynomial}. Conversely, for every root $\alpha \in k$ of $P_{10}$, there exists a unique conjugacy class of $\tau \in \Aut(\widetilde{Y})$ with $h(\tau) = {\rm log}~\lambda_{10}$ and such that $\tau$ acts on $\Pic^0(E) \cong k$ as multiplication by $\alpha$.
\end{enumerate}
\end{theorem}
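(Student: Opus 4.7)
The plan is to combine the uniqueness of the $P_{10}$-isometry from \Cref{lem: E10} with the Harbourne--McMullen theory of rational surfaces carrying an irreducible anticanonical cuspidal curve, then match with the explicit construction of \Cref{sec: existence}.

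Since $|{-K_{\widetilde{Y}}}| = \{E\}$, every automorphism of $\widetilde{Y}$ preserves $E$, and $\tau^*$ acts on $K_{\widetilde{Y}}^\perp \cong \Enriqueslattice$ with characteristic polynomial $P_{10}$. Because $E$ is cuspidal, $\Pic^0(E) \cong \mathbb{G}_a \cong k$, and the pullback $\tau^*$ on $\Pic^0(E)$ is a group scheme automorphism, hence multiplication by a well-defined scalar $\alpha \in k^\times$. The restriction homomorphism $\varphi \colon K^\perp \to \Pic^0(E)$, $D \mapsto D|_E$, is $\tau$-equivariant, and since the blown-up points $p_1,\ldots,p_{10}$ are distinct we have $\varphi(e_i - e_j) = p_i - p_j \neq 0$ in $\Pic^0(E)$, so $\varphi \neq 0$. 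From $P_{10}(\tau^*)=0$ on $K^\perp$ I deduce $P_{10}(\alpha)\cdot \varphi = 0$, and hence $\alpha$ is a root of $P_{10}$; this establishes the first statement of Part~(5).

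By \Cref{lem: E10} there is a unique conjugacy class of isometries $\phi$ of $\Enriqueslattice$ with characteristic polynomial $P_{10}$. For each root $\alpha \in k$ of $P_{10}$, the $\tau$-equivariance together with $\phi$ determines $\varphi$ up to isomorphism, and Harbourne's framework \cite{Harbourne:blowing} for marked anticanonical rational surfaces then recovers the ten blown-up points $p_i \in E^{\rm sm}$ via $p_i - p_j = \varphi(e_i - e_j)$, up to the residual action of $\Aut(E,{\rm cusp})$. Hence the pair $(\widetilde{Y},\tau)$ is determined up to isomorphism by $\alpha$. Since \Cref{thm: existence main} realises such a pair for one specific $\alpha$, I conclude $\widetilde{Y} \cong Z_1$, proving Part~(4); the ten roots of $P_{10}$ (all lying in $\mathbb{F}_{32}$) produce the ten distinct conjugacy classes of the converse statement in Part~(5). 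Parts~(1), (2) and (3) then follow from this identification with $Z_1$: for Part~(1), the configuration of the ten points on $E$ in \Cref{sec: existence} is readily checked to avoid the linear conditions (no three collinear, no six on a conic, etc.) that would force $(-2)$-curves; for Part~(2), $\langle\tau\rangle$ surjects onto $\mathbb{Z}/31\mathbb{Z}$ modulo the kernel of its action on $\Enriqueslattice/2\Enriqueslattice$, while $W_{\Enriqueslattice}(2)$ acts on the Enriques quotient $\XLehmer$ by the Allcock--Dolgachev--Martin theorem, and the absence of $(-2)$-curves guarantees that this action lifts through the birational diagram of \Cref{figure:diagram} to biregular automorphisms of $\widetilde{Y}$; for Part~(3), the Galois group $\Gal(\mathbb{F}_{32}/\mathbb{F}_2)$ permutes the ten conjugacy classes (via $\alpha \mapsto \alpha^2$) while fixing the underlying isomorphism class of $\widetilde{Y}$, yielding an $\mathbb{F}_2$-model by standard descent.

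The main obstacle will be the Harbourne reconstruction in the converse direction of Part~(5): although \Cref{lem: E10} pins down the abstract lattice data and $\alpha$ fixes the action on $\Pic^0(E)$, translating these into a biregular isomorphism $\widetilde{Y} \cong Z_1$ requires careful bookkeeping of exceptional markings and the residual action of $\Aut(E,{\rm cusp}) = \mathbb{G}_m \ltimes \mathbb{G}_a$, especially in characteristic~$2$ where $\Pic^0(E) = \mathbb{G}_a$ behaves quite differently from the smooth-elliptic setting of the classical Harbourne framework. A secondary difficulty is establishing in Part~(2) that the full $W_{\Enriqueslattice}(2)$-action on $\XLehmer$ lifts to biregular automorphisms of $\widetilde{Y}$ through the chain of $\alpha_2$-torsors and blow-downs in \Cref{figure:diagram}.
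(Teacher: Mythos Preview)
Your overall strategy---combine \Cref{lem: E10} with the Harbourne--McMullen picture and match with \Cref{sec: existence}---is the same as the paper's, and your derivation of the first half of Part~(5) (that $\alpha$ is a root of $P_{10}$ via equivariance of $\varphi$) is correct. But the logical order you propose, namely $(4)\Rightarrow(1),(2),(3)$, creates real problems.

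\textbf{Part~(1).} Your ``readily checked'' claim fails: for a blow-up of ten points the lattice $K_{\widetilde{Y}}^\perp\cong\Enriqueslattice$ is hyperbolic and has \emph{infinitely} many $(-2)$-vectors, so the list ``no three collinear, no six on a conic, \ldots'' is not exhaustive. What one actually needs is that $\ker\varphi$ contains no root, and since ${\rm char}(k)=2$ one always has $2\Enriqueslattice\subseteq\ker\varphi$, so this is a statement about the $\tau$-invariant sublattice $\ker\varphi$ sitting strictly between $2\Enriqueslattice$ and $\Enriqueslattice$. The paper proves~(1) \emph{first} and directly, by a lattice argument: any nonzero $\tau$-stable sublattice $\Delta+2\Enriqueslattice$ must reduce to one of the two $5$-dimensional $\tau$-invariant subspaces of $\Enriqueslattice/2\Enriqueslattice$, and an analysis of maximal isotropic subspaces in ${\rm GO}^+_{10}(2)$ forces $\Delta+2\Enriqueslattice\cong\Enriqueslattice(2)$, which contains no $(-2)$-vectors. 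This step is not cosmetic---it is what allows Harbourne's Lemma~2.9 to be invoked afterwards, so that an arbitrary Weyl element sends exceptional configurations to exceptional configurations and one may normalise $\tau^*$ to the standard Coxeter element. Your route needs~(1) as input to~(4), not as a consequence.

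\textbf{Part~(4).} Even granting~(1), your abstract reconstruction (``$\alpha$ determines $\varphi$ up to isomorphism, hence the points up to $\Aut(E,{\rm cusp})$'') hides the actual content. After normalising $\tau^*$ to the Coxeter element and fixing coordinates so that the cuspidal cubic is $y^2z=x^3$ and $p_1=[1:1:1]$, the restriction $\tau|_E$ has the form $t\mapsto\alpha t+\beta$, and the Coxeter relations give \emph{two} expressions for $p_2$; equating them yields a linear equation in $\beta$ whose coefficient is the explicit nonzero element $\alpha^{-5}+\alpha^{-4}+\alpha^{-3}+\alpha^{-2}+\alpha^{-1}+\alpha^2$. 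This computation is what proves uniqueness; ``up to $\Aut(E,{\rm cusp})$'' alone does not, because one must still verify that the constraints are consistent and determine $\beta$.

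\textbf{Part~(2).} Your plan to pull $W_{\Enriqueslattice}(2)$ back from the Enriques quotient $\XLehmer$ through the diagram of \Cref{figure:diagram} is both circular (the statement that $\XLehmer$ has no $(-2)$-curves, needed for Allcock--Dolgachev--Martin, is proved in \Cref{thm: uniqueness main} \emph{using} the present theorem) and technically fraught (lifting through an $\alpha_2$-torsor is exactly the delicate point analysed later via the conjugation representation on $H^0(Y,T_Y)$). The paper instead works entirely on $\widetilde{Y}$: Harbourne's exact sequence $0\to W_{\Enriqueslattice}(2)\to\Aut(\widetilde{Y})\to G\to 0$ with $G\subseteq\Aut(\Pic^0(E))\cong k^\times$ gives the lower bound, and the centraliser of an order-$31$ element in ${\rm GO}^+_{10}(2)$ (which has order~$31$) gives the upper bound $|G|=31$. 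The same centraliser computation also yields the uniqueness of conjugacy classes in Part~(5), and the existence of a normalising element with $\overline w^{-1}\overline\tau\,\overline w=\overline\tau^2$ is what makes the Frobenius descent for Part~(3) go through and shows all ten $\alpha$ are realised on the same surface, proving~(4).
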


\begin{proof}
Pick $10$ disjoint $(-1)$-curves $E_1,\hdots,E_{10}$ such that the contraction of the $E_i$ yields a birational morphism $\pi\colon \widetilde{Y} \to \mathbb{P}^2$ and let $H$ be the strict transform of a general line in $\mathbb{P}^2$.
The divisors $H,E_1,\hdots,E_{10}$ define an isometry $\mathbb{Z}^{1,10} \cong \Pic(\widetilde{Y})$. We have 
$
-K_{\widetilde{Y}} \sim 3H - \sum_{i=1}^{10} E_i
$
and $K_{\widetilde{Y}}^{\perp} \cong \Enriqueslattice$. Since $E$ is anti-canonical, we also have a restriction homomorphism
\[
\varphi \colon K_{\widetilde{Y}}^{\perp}  \longrightarrow  \Pic^0(E).
\]

 For Claim (1), assume seeking a contradiction that $\widetilde{Y}$ contains a $(-2)$-curve. By adjunction, any $(-2)$-curve is orthogonal to $K_{\widetilde{Y}}$.
 Denote by $\Delta \subseteq K_{\widetilde{Y}}^{\perp} \cong \Enriqueslattice$ the sublattice generated by classes of $(-2)$-curves. Then, $\tau$ preserves the chain of sublattices
 \[
    2K_{\widetilde{Y}}^{\perp} \subsetneq \Delta + 2K_{\widetilde{Y}}^{\perp} \subsetneq K_{\widetilde{Y}}^{\perp}.
 \]
 Here, the first inclusion is strict since $\Delta \neq 0$ and $(-2)$-classes are not $2$-divisible. To see that the second inclusion is also strict, observe that $2K_{\widetilde{Y}}^{\perp}  \subseteq \Ker(\varphi)$ as $\Pic^0(E)\cong k$ is $2$-torsion, that $\Delta \subseteq \Ker(\varphi)$, and that the image of $\varphi$ is non-trivial because $\varphi(E_i-E_j) \neq 0$ for $i \neq j$, as we blow up distinct points. 

The action of $\tau$ on $K_{\widetilde{Y}}^{\perp}/2K_{\widetilde{Y}}^{\perp}$ has characteristic polynomial given by \Cref{eq:Lehmer.mod2}, so $(\Delta + 2K_{\widetilde{Y}}^{\perp})/2K_{\widetilde{Y}}^{\perp}$ is one of the two $5$-dimensional subspaces of 
\[
K_{\widetilde{Y}}^{\perp}/2K_{\widetilde{Y}}^{\perp} \cong \Enriqueslattice/2\Enriqueslattice
\]
invariant under the isometry of order $31$ induced by $\tau$. 
By \cite[Section~1.4]{BarthPeters}, the orthogonal group of $\Enriqueslattice/2\Enriqueslattice$ equipped with the quadratic form $\frac{1}{2}q \pmod{2\IZ}$, $q$ being the quadratic form on $\Enriqueslattice$, is ${\rm GO}^+_{10}(2)$ in ATLAS \cite{atlas} notation, and $\tau$ lies in the simple subgroup ${\rm O}^+_{10}(2) \subseteq {\rm GO}^+_{10}(2)$ of index $2$ of isometries of quasi-determinant~$1$. By \cite[p.~180]{atlas}, there are two ${\rm O}^+_{10}(2)$-conjugacy classes of maximal isotropic subspaces of $\Enriqueslattice/2\Enriqueslattice$ with $2295 \equiv 1 \pmod{31}$ members each, so $\tau$ preserves one of each family. Moreover, all maximal isotropic subspaces are conjugate under ${\rm GO}^+_{10}(2)$ so that, in summary, there is a unique isometry class of lattices between $2\Enriqueslattice$ and $\Enriqueslattice$ that is preserved by $\tau$. As the $2$-elementary lattice $\Enriqueslattice(2)$ has an isometry of dynamical degree $\lambda_{10}$ that extends to an isometry of $\Enriqueslattice$, it is an example of such a lattice, so we conclude that $\Delta + 2K_{\widetilde{Y}}^{\perp} \cong \Enriqueslattice(2)$. But $\Enriqueslattice(2)$ has no $(-2)$-vectors, a contradiction.

We now proceed with the remaining claims. For this, recall first that by a result of Vinberg \cite[Theorem 2.2]{Allcock}, the Weyl group $W_{\Enriqueslattice}$ has index $2$ in the orthogonal group of $\Enriqueslattice$ and in fact $\rm{O}(\Enriqueslattice) = W_{\Enriqueslattice} \times \{\pm 1\}$. In particular, we can consider $W_{\Enriqueslattice}$ as the subgroup of $\operatorname{O}(\Pic(\widetilde{Y}))$ that fixes $K_{\widetilde{Y}}$ and preserves the positive cone. Thus, the representation of $\Aut(\widetilde{Y})$ on $\Pic(\widetilde{Y})$ factors through $W_{\Enriqueslattice}$. We claim that we may assume that $\tau^* \in W_{\Enriqueslattice}$ is the inverse of the standard Coxeter element $w$ (compare \cite[Section 8]{McMullen:dynamics.blowups.proj.plane}) that acts on $H,E_1,\hdots,E_{10}$ as
\begin{align*}
w(H)    & \sim  2H - E_2-E_3-E_4,   & w(E_3)  & \sim  H - E_2 - E_3, \\
w(E_1)  & \sim  H - E_3 -E_4,        & w(E_n)  & \sim E_{n+1} \quad \text{ for } 4 \leq n \leq 9,  \\
w(E_2)  & \sim H - E_2 - E_4, & w(E_{10}) & \sim E_1. 
\end{align*}
To see this, note that by \cite[p. 154]{McMullen:coxeter}, the characteristic polynomial of $w$ is Lehmer's polynomial \(P_{10}\) (\ref{eq:Lehmers.polynomial}). From \Cref{lem: E10} and since $\rm{O}(\Enriqueslattice) = W_{\Enriqueslattice} \times \{\pm 1\}$, we conclude that $w = (w')^{-1} \circ  \tau^*\circ w'$ for some $w' \in W_{\Enriqueslattice}$. By \cite[Lemma~2.9]{Harbourne:blowing}, the collection $w'(E_1),\hdots,w'(E_{10})$ is another exceptional configuration for a blow-down to $\mathbb{P}^2$. Thus, after replacing $H,E_1,\hdots,E_{10}$ by $w'(H),w'(E_1),\hdots,w'(E_{10})$, we may assume that $(\tau^*)^{-1}$ is of the above form.

Now, consider the blow-down $\pi\colon \widetilde{Y} \to \mathbb{P}^2$ of the $E_i$ with $p_i \coloneqq \pi(E_i)$. We choose coordinates such that $\pi(E)$ is the cuspidal cubic $C =\{ y^2 z = x^3\}$, so that the unique flex point of $C$ is $q \coloneqq [0:1:0]$, and such that $p_1 = [1:1:1]$. Via the parametrization $\psi\colon k \to C^{\mathrm{sm}}(k)$, $t \mapsto [t:1:t^3]$, of the smooth locus $C^{\mathrm{sm}}$ of $C$, addition on $k$ is identified with the group law on $C^{\mathrm{sm}}(k) \cong \Pic^0(C)$, $p \mapsto \mathcal{O}(p-q)$.

The automorphism $\tau$ induces an automorphism of $C^{\mathrm{sm}}$ which we can write with respect to the parametrization above as $\tau_C(t) = \alpha t + \beta$ for some $\alpha \in k^{\times}$ and $\beta \in k$. Observe that $\alpha$ is exactly the image of $\tau$ under the natural map $\Aut(\widetilde{Y}) \to \Aut(\Pic^0(C)) \cong k^{\times}$, because the tangent space $T_q C$ of $C$ at its flex point $q$ is identified with the tangent space of $\Pic^0_C$ under the identification $p \mapsto \mathcal{O}(p-q)$ and $\tau$ acts on $T_q C$ as multiplication by $\alpha$. In particular, $\alpha$ is a root of Lehmer's polynomial.

Note that $\tau_C^{-n}(1) = \alpha^{-n}(1 + \sum_{i=0}^{n-1} \alpha^i \beta)$. Thus, we have
\begin{align*}
p_{n} &=  \psi (\alpha^{n-11} (1 + \sum_{i=0}^{10-n} \alpha^i \beta ) )   \qquad \text{ for } n = 4,\ldots,10 \\
\{p_3\} &= (\ell_{\tau(p_1)p_4} \cap C) - \{\tau(p_1),p_4\} = \{\psi(\alpha +\beta + \alpha^{-7}(1 + \sum_{i=0}^{6} \alpha^i \beta))\} 
\\
\{p_2\} &= (\ell_{\tau(p_3)p_3} \cap C)  - \{\tau(p_3),p_3 \} 
  = \{\psi(\alpha + \alpha^2 + \alpha^{-6} + \alpha^{-7} + \alpha \beta + \beta + \alpha^{-7} \beta) \} \\
\{p_2\} &= (\ell_{\tau(p_2)p_4} \cap C) - \{\tau(p_2),p_4\} =\\
&= \{\psi(\alpha^2 + \alpha^3 + \alpha^{-5} + \alpha^{-6} + \alpha^{-7} + (\alpha^{-7} + \sum_{i=0}^7{\alpha^{i-5}} )\beta)\}
\end{align*}
where $\ell_{\tau(p_i)p_j}$ is the line through $\tau(p_i) = \pi(w(E_i))$ and $p_j$. 
Now, for $\alpha \in k$ a root of Lehmer's polynomial, the sum
$$
\alpha^{-5}+\alpha^{-4}+\alpha^{-3}+\alpha^{-2}+\alpha^{-1}+\alpha^2
$$
of the coefficients of $\beta$ in the two expressions of $p_2$ is non-zero, so that $\beta$ is uniquely determined by $\alpha$. 
In other words, the scalar $\alpha$ uniquely determines the points $p_i$, and hence the surface $\widetilde{Y}$.

Next, we prove Claim (2). First, note that $\Aut(\widetilde{Y})$ acts faithfully on $\Pic(\widetilde{Y})$, since every automorphism in the kernel preserves the curves $E_i$ and descends to an automorphism of $\mathbb{P}^2$, but the only automorphism of $\mathbb{P}^2$ fixing the $p_i$ is the identity. Then, by \cite[Lemma 3.6]{Harbourne}, we have a short exact sequence
\begin{equation} \label{eq: harbourne sequence}
0 \longrightarrow W_{\Enriqueslattice}(2) \longrightarrow \Aut(\widetilde{Y}) \longrightarrow G \longrightarrow 0,
\end{equation}
where $G \subseteq \Aut(\Pic^0(E))$ is the group of automorphisms for which there exists an isometry of $K_{\widetilde{Y}}^{\perp}$ making $\varphi$ equivariant. 
Since $\Aut(\Pic^0(E)) \cong k^{\times}$, we deduce that $G$ is cyclic. 
Moreover, as the image of $\tau$ in $G$ is a root of Lehmer's polynomial, and hence a primitive $31$-st root of unity, we have $31 \mid |G|$.
On the other hand, as $\Aut(\widetilde{Y}) \subseteq W_{\Enriqueslattice}$, we have
\[
G \subseteq W_{\Enriqueslattice}/W_{\Enriqueslattice}(2) \cong {\rm GO}^+_{10}(2).
\]
By \cite[p. 142]{atlas}, the centralizer of an element of order $31$ in ${\rm GO}^+_{10}(2)$ has order $31$, hence $|G| = 31$, as desired.

To prove Claim (3), recall that, because the cohomological dimension of a finite field is $1$, the surface $\widetilde{Y}$ is defined over $\mathbb{F}_2$ if and only if $\widetilde{Y}$ and its Frobenius pullback $\widetilde{Y}^{(2)}$ are isomorphic over $k$. Now, if $\tau$ is an automorphism of dynamical degree $\lambda_{10}$ on $\widetilde{Y}$ acting through a root $\alpha$ of Lehmer's polynomial on $\Pic^0(C)$, then its Frobenius pullback $\tau^{(2)}$ is an automorphism of dynamical degree $\lambda_{10}$ on $\widetilde{Y}^{(2)}$ that acts on the corresponding Jacobian as $\alpha^2$. Since we have proved above that this scalar uniquely determines the surface, it suffices to show that there is an automorphism $\tau' \in \Aut(\widetilde{Y})$ of dynamical degree $\lambda_{10}$ that acts on $\Pic^0(C)$ as multiplication by $\alpha^2$. This follows from the exact sequence \eqref{eq: harbourne sequence}: indeed, by \cite[p. 142]{atlas}, there exists an element $w \in W_{\Enriqueslattice}$ such that that the image $\overline{w}$ of $w$ in ${\rm GO}^+_{10}(2)$ normalizes $G = \langle \overline{\tau} \rangle$ and such that $\overline{w}^{-1} \overline{\tau} \overline{w} = \overline{\tau}^2$. Since the kernel of $W_{\Enriqueslattice} \to G$ is contained in $\Aut(\widetilde{Y})$, we have thus found $ \tau' \coloneqq w^{-1} \tau w \in \Aut(\widetilde{Y})$ such that $\tau'$ has the same characteristic polynomial as $\tau$ and $\tau'$ acts on $\Pic^0(C)$ as multiplication by $\alpha^2$, as desired. The same argument also proves Claim (4), as all ten possible choices of the scalar $\alpha$, namely $\{\alpha^{\pm 2^i}\}$ for $i=0,\ldots 4$, are realized on $\widetilde{Y}$, hence $\widetilde{Y}$ is unique and thus isomorphic to the surface $Z_1$ of \Cref{figure:diagram}.

Finally, for Claim (5), it suffices to show that if $\tau,\tau' \in \Aut(\widetilde{Y})$ have dynamical degree $\lambda_{10}$ and if they act by multiplication by the same $\alpha$ on $\Pic^0(C)$, then they are conjugate in $\Aut(\widetilde{Y})$. 
By \Cref{lem: E10}, there exists $w \in W_{E_{10}}$ with $w^{-1} \tau w = \tau'$. Since $\overline{\tau} = \overline{\tau'}$, the image $\overline{w}$ of $w$ in ${\rm GO}^+_{10}(2)$ lies in the centralizer of $\overline{\tau}$. By \cite[p.~142]{atlas}, this centralizer is the subgroup generated by $\overline{\tau}$, that is, $\overline{w} \in G$, and so $w \in \Aut(\widetilde{Y})$ by the exact sequence \eqref{eq: harbourne sequence}.
\end{proof}

The unique rational surface of \Cref{lem: uniqueness} will play the role of the canonical cover of $\XLehmer$ in the proof of the following result, which will imply \Cref{thm: uniqueness intro} (2) and \Cref{thm:automorphisms} of the introduction:

\begin{theorem} \label{thm: uniqueness main}
If $X$ is an Enriques surface over an algebraically closed field $k$ with $\operatorname{char}(k) = 2$ and $\sigma \in \Aut(X)$ is an automorphism with $h(\sigma) = \log \lambda_{10}$, then the following hold:
\begin{enumerate}
    \item The surface $X$ contains no $(-2)$-curves.
    \item The group $\Aut(X)$ is an extension of $\mathbb{Z}/31\mathbb{Z}$ by $W_{\Enriqueslattice}(2)$.
    \item The surface $X$ can be defined over $\mathbb{F}_2$.
    \item There are ten conjugacy classes of elements of dynamical degree $\lambda_{10}$ in $\Aut(X)$.
    \item There is an isomorphism $X \cong \XLehmer$.
\end{enumerate}
\end{theorem}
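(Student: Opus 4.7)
The plan is to combine the exclusion results of this section with the rational-surface uniqueness of \Cref{lem: uniqueness} and then identify the Enriques quotient through an analysis of global $2$-closed derivations.

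By \Cref{prop: charneq2}, \Cref{prop: char2ordinary}, \Cref{prop: char2nonnormal}, and \Cref{prop: char2normal}, we have $\operatorname{char}(k)=2$ and the canonical cover $\pi\colon Y\to X$ is a normal rational surface with a unique elliptic double point (case (C)). Let $\gamma\colon \widetilde{Y}\to Y$ be the minimal resolution. Then $\widetilde{Y}$ is a smooth rational surface of Picard rank~$11$, its exceptional curve $E$ is an irreducible cuspidal curve of arithmetic genus~$1$ with $E^2=-1$, and $-K_{\widetilde{Y}}\sim E$. In particular, $\widetilde{Y}$ is a blow-up of $\IP^2$ at $10$ distinct points lying on a cuspidal cubic. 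The automorphism $\sigma$ lifts to $\widetilde{Y}$ as $\tau\in\Aut(\widetilde{Y})$ preserving $E$ and of dynamical degree $\lambda_{10}$, so \Cref{lem: uniqueness} gives $\widetilde{Y}\cong Z_1$ of \Cref{figure:diagram}.

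To prove Claim~(5) we pin down the Enriques quotient. The Enriques surface $X$ is determined, up to scalar, by a non-zero regular $2$-closed derivation $D$ on $X$ --- equivalently, by an $\alpha_2$-action on $Y$ coming from the canonical torsor structure --- and the condition that $\sigma$ descend from $\tau$ translates to $\tau$ normalising the line $kD$. Although by \cite{Schroeer:Enriques.normal.K3.coverings} the Enriques surfaces with canonical cover $Y$ form a positive-dimensional family, only one is compatible with $\tau$. To see this, we decompose the space of $2$-closed derivations of $Y$ into $\tau$-eigenspaces, whose eigenvalues are $31$-st roots of unity by analogy with the action on $\Pic(\widetilde{Y})$, and rule out all $\tau$-invariant lines but one by combining (a)~the requirement that the $\alpha_2$-quotient be a smooth Enriques surface, and (b)~the explicit behaviour of $\tau$ along $E$ and along the exceptional divisors over the ten blown-up points of $\IP^2$. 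The derivation $D=g^2\partial_w$ of \Cref{rmk: Giacomos remark} realises the unique valid line (with $\tau$-eigenvalue $\zeta^8$), hence $X\cong\XLehmer$.

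Claims~(1)--(4) then follow from the corresponding statements of \Cref{lem: uniqueness}. For~(1), any $(-2)$-curve on $X$ would pull back to $\widetilde{Y}$ as an effective divisor disjoint from $E$ and supported on $(-2)$-curves, contradicting \Cref{lem: uniqueness}(1). For~(2), every automorphism of $X$ lifts uniquely to $\widetilde{Y}$ preserving $E$, embedding $\Aut(X)$ into $\Aut(\widetilde{Y})$; Claim~(1) together with \cite{Allcock,DolgachevMartin} gives $W_{\Enriqueslattice}(2)\subseteq\Aut(X)$, and combined with $\sigma$ this yields the full extension from \Cref{lem: uniqueness}(2). Claim~(3) follows from \Cref{lem: uniqueness}(3) together with the $\mathbb{F}_2$-rationality of the distinguished line $kD$. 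For~(4), the ten roots of Lehmer's polynomial in $k$ give ten conjugacy classes of dynamical degree $\lambda_{10}$ automorphisms on $\widetilde{Y}$ by \Cref{lem: uniqueness}(5), each descending to a distinct conjugacy class on $X$ since the centraliser of $\langle\tau\rangle$ in $\Aut(\widetilde{Y})$ modulo $W_{\Enriqueslattice}(2)$ is just $\langle\tau\rangle$. The main obstacle will be the vector-field analysis of the second paragraph: pinning down the space of $2$-closed derivations on $Y$ and the $\tau$-action on it precisely enough to isolate the unique $\tau$-semi-invariant line producing a smooth Enriques quotient. This is essential because the forgetful map from Enriques surfaces with canonical cover $Y$ to $Y$ itself is not injective; dynamical rigidity under $\tau$ is what collapses this family to a single point.
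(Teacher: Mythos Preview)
Your overall strategy matches the paper's: reduce to case~(C), apply \Cref{lem: uniqueness} to pin down $\widetilde Y\cong Z_1$, then argue that the $\tau$-semi-invariance constraint on the $\alpha_2$-action singles out a unique Enriques quotient. You correctly flag the vector-field step as the crux, and that is where your proposal has a genuine gap: the sketch ``decompose into $\tau$-eigenspaces and rule out all but one line via (a)+(b)'' does not actually isolate a unique line, and the assertion that the eigenvalues are $31$st roots of unity ``by analogy with $\Pic(\widetilde Y)$'' is neither justified nor, in the end, what is needed.

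The paper's resolution is sharper and rests on three inputs you are missing. First, $T_Y\cong\mathcal O_Y^{\oplus2}$ by \cite{Ekedahl.Hyland.Shepherd-Barron,Matsumoto}, so $H^0(Y,T_Y)$ is exactly $2$-dimensional and every section is already $2$-closed with square zero; the conjugation representation $\rho\colon\Aut(Y)\to\mathrm{GL}_2$ is thus the whole story. Since $W_{\Enriqueslattice}(2)$ is generated by involutions and $k^\times$ has no $2$-torsion, $W_{\Enriqueslattice}(2)\subseteq\Ker\rho$, so uniqueness reduces to showing the two $\tau$-eigenvalues $\lambda_1,\lambda_2$ are distinct. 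Second, one eigenline is the line $\ell_{\mathrm{rat}}$ of derivations with a fixed point \cite[Proposition~3.7]{Matsumoto}, while the existence result \Cref{thm: existence main} furnishes the other, fixed-point-free eigenline with $\lambda_1=\zeta^8$ (\Cref{rmk: Giacomos remark}). Third, and this is the key computation, $\lambda_1\lambda_2=\det\rho(\tau)$ equals the scalar by which $\tau$ acts on $H^0(Y,\omega_Y)^\vee\cong H^2(Y,\mathcal O_Y)\cong H^1(E,\mathcal O_E)$ (Leray for $\gamma$), i.e.\ on the tangent space of $\Pic^0_E$; this is precisely the $\alpha$ of \Cref{lem: uniqueness}(5), a root of $P_{10}$. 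Were $\lambda_1=\lambda_2$, one would get $\alpha=\zeta^{16}$, which is not a root of $P_{10}$. This determinant-via-$H^1(E,\mathcal O_E)$ step is the missing idea in your outline. Note also that Claims~(3)--(5) in the paper are deduced \emph{after} first establishing in Claim~(2) that $\Aut(X)\cong N_{\alpha_2}(k)=\Aut(Y)$ (using that $W_{\Enriqueslattice}(2)$ is a maximal subgroup of $\Aut(Y)$), so the relevant invariance is under all of $\Aut(Y)$, not just $\tau$.
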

\begin{proof}
Let $\pi \colon Y \to X$ be the canonical cover of $X$, let $\gamma \colon \widetilde{Y} \to Y$ be the minimal resolution of $Y$, let $\sigma \in \Aut(X)$ be an automorphism of dynamical degree $\lambda_{10}$, and let $\tau$ be its lift to $\widetilde{Y}$. 
By \Cref{prop: char2nonnormal,prop: char2normal}, $Y$ is a normal rational surface with a unique elliptic singularity, so by \cite[end of Section 13]{Schroeer:Enriques.normal.K3.coverings}, the morphism $\pi$ is an $\alpha_2$-torsor.
By \cite[Section 3]{Matsumoto}, the exceptional divisor $E$ of $\gamma$ is an integral cuspidal curve of genus \(1\) and self-intersection~$-1$. 
Since $K_Y \sim 0$, we have $K_{\widetilde{Y}} \sim_{\mathbb{Q}} \mu E$ for some $\mu \in \mathbb{Q}$. By adjunction,
\[
    0 = \deg_{E}(K_{\widetilde{Y}} + E) = (1 + \mu)E^2,
\]
so $\mu = -1$, that is, $E$ is an anti-canonical curve on $\widetilde{Y}$.
Moreover, we have
\[
    \Pic(\widetilde{Y}) = \Pic(Y) \oplus \mathbb{Z}\cdot E \cong \Pic(X) \oplus \mathbb{Z} \cdot E,
\]
compatibly with the $\tau$ and $\sigma$ actions, so $\tau$ has dynamical degree $\lambda_{10}$ on $\widetilde{Y}$.

By \Cref{lem: uniqueness}, the surface $\widetilde{Y}$ is unique. Note that $\gamma \colon \widetilde{Y} \to Y$ identifies $\Aut(Y)$ with $\Aut(\widetilde{Y})$: indeed, every automorphism of $\widetilde{Y}$ descends to $Y$, since $Y$ is the contraction of the unique anti-canonical curve, and conversely, all automorphisms of $Y$ lift to $\widetilde{Y}$, because minimal resolutions are unique.
By \cite[Proposition 3.1]{Martin}, we have an exact sequence of group schemes of the form
\begin{equation} \label{eq: normalizer}
1 \longrightarrow \alpha_2 \longrightarrow N_{\alpha_2} \longrightarrow \Aut_X,
\end{equation}
where $N_{\alpha_2} \subseteq \Aut_{Y}$ is the normalizer of the $\alpha_2$-action corresponding to $\pi \colon Y \to X$.
As explained before \Cref{prop: char2ordinary}, the morphism $N_{\alpha_2} \to \Aut_X$ is surjective on $k$-points, since $\pi$ is the canonical cover of $X$. In particular, $\Aut(X) \cong N_{\alpha_2}(k)\subseteq \Aut(Y)$.

Now, Claim (1) follows from \cite[Proposition 8.8, proof of Theorem 14.1]{Schroeer:Enriques.normal.K3.coverings} or from \Cref{lem: uniqueness} and the fact that every $(-2)$-curve on $X$ would have preimage supported on a $(-2)$-curve on $\widetilde{Y}$.

For Claim (2), recall that $W_{\Enriqueslattice}(2) \subseteq \Aut(X)$ by \cite[Theorem 1.1]{Allcock}. Since we are assuming the existence of $\sigma$, we deduce that $\Aut(X)\cong N_{\alpha_2}(k)$ strictly contains $W_{\Enriqueslattice}(2)$. But $\Aut(Y) \cong \Aut(\widetilde{Y})$ is an extension of $\mathbb{Z}/31\mathbb{Z}$ by $W_{\Enriqueslattice}(2)$ by \Cref{lem: uniqueness}, hence contains $W_{\Enriqueslattice}(2)$ as maximal subgroup, and so $\Aut(X)\cong N_{\alpha_2}(k) = \Aut(Y)$. 

Claims (3), (4) and (5) will follow immediately from \Cref{lem: uniqueness} if we can prove that the above $\alpha_2$-action on $Y$ is the unique one such that $N_{\alpha_2}(k) = \Aut(Y)$. Using the correspondence between group actions of height $1$ and restricted Lie subalgebras of $H^0(Y,T_Y)$ (see, e.g., \cite[Exp.~VIIA, Théorème~7.2]{SGA3}), we thus need to show that there is a unique line $\ell_{\rm Enr} \subseteq H^0(Y,T_Y)$ such that for all $D \in \ell_{\rm Enr}$ we have $D^2 = 0$, $D$ has no fixed points (in the sense of \cite[Section 2.2]{Matsumoto}), and such that for all $\psi \in \Aut(Y)$, there exists $\lambda \in k$ such that $\psi D \psi^{-1} = \lambda D$.

To this end, we use that by \cite[Corollary 3.7]{Ekedahl.Hyland.Shepherd-Barron} or \cite[Theorem 1.4, Proposition 3.7]{Matsumoto}, the tangent sheaf $T_Y$ is isomorphic to $\mathcal{O}_Y^{\oplus 2}$, and all its global sections $D$ satisfy $D^2 = 0$. We can thus consider the $2$-dimensional conjugation representation
\[
\rho \colon \Aut(Y) \longrightarrow {\rm GL}(H^0(Y,T_Y))
\]
and we have to show that there is a unique $1$-dimensional $\rho(\Aut(Y))$-stable subspace of $H^0(Y,T_Y)$ consisting of fixed point free derivations. 

By \cite[Proposition 3.7]{Matsumoto}, there is a line $\ell_{\rm rat} \subseteq H^0(Y,T_Y)$ parametrizing the derivations with fixed points. On the other hand, from \Cref{thm: existence main}, we know that there is some fixed point free derivation $D$ spanning a $\rho(\Aut(Y))$-stable subspace, corresponding to the Enriques quotient $\XLehmer$. Thus, the representation $\rho$ is a direct sum of two $1$-dimensional representations $\rho_1$, $\rho_2$ and we have to show that $\rho_1 \neq \rho_2$.
Using the fact that $W_{\Enriqueslattice}(2)$ is generated by involutions, that $\rho$ is a direct sum of two $1$-dimensional representations, and that $\Aut(k) =k^{\times}$ contains no elements of order $2$, we have $W_{\Enriqueslattice}(2) \subseteq \Ker(\rho)$. In particular, we have $\rho_1 \neq \rho_2$ if and only if the two eigenvalues $\lambda_1$ and $\lambda_2$ of the $\tau$-action on $H^0(Y,T_Y)$ are distinct.

We compute these eigenvalues using our example $X = \XLehmer$ in \Cref{thm: existence main}, taking $\tau$ to be the lift of $\sigmaLehmer$. Since the Lie algebra of $\Aut_Y$ is abelian by \cite[Theorem 1.4]{Matsumoto}, the tangent space of $N_{\alpha_2}$ has dimension \(2\). Moreover, $h^0(X,T_X) = 1$ by \cite[Corollary 1.4.9]{CossecDolgachevLiedtke}, so the map $$H^0(Y,T_Y) \to H^0(X,T_X)$$ is surjective. Thus, one of the eigenvalues of $\rho(\tau)$, say $\lambda_1$, is the eigenvalue of conjugation by $\sigmaLehmer$ on the $1$-dimensional space $H^0(X,T_{X})$. By \Cref{rmk: Giacomos remark}, we have $\lambda_1 = \zeta^8$.

On the other hand, since $T_Y \cong \mathcal{O}_Y^{\oplus 2}$, the global sections of $T_Y$ generate the tangent space at every smooth point of $Y$, so the determinant of the conjugation action on $H^0(Y,T_Y)$ can be identified with the pullback action on $H^0(Y,\omega_Y)^{\vee} \cong H^2(Y,\mathcal{O}_Y)$. 
Since $\widetilde{Y}$ is smooth and rational, we have
\[
H^1(\widetilde{Y}, \mathcal{O}_{\widetilde{Y}}) = H^2(\widetilde{Y}, \mathcal{O}_{\widetilde{Y}}) = 0.
\]
It follows from the Leray spectral sequence associated to $\gamma \colon \widetilde{Y} \to Y$, together with the theorem on formal functions, 
that there are natural isomorphisms
\[
H^2(Y, \mathcal{O}_Y) \cong H^0\!\left(Y, R^1\gamma_* \mathcal{O}_{\widetilde{Y}}\right) 
\cong H^1(E, \mathcal{O}_E).
\]
Thus, we conclude that the automorphism 
\[
    \tau|_E^* \colon H^1(E,\mathcal{O}_E) \longrightarrow H^1(E,\mathcal{O}_E)
\]
is scaling by $\lambda_1\lambda_2$.

As $H^1(E,\mathcal{O}_E)$ is naturally isomorphic to the tangent space of $\Pic^0_E$ at the identity, the scalar $\lambda_1 \lambda_2$ is nothing but the scalar $\alpha$ that appears in \Cref{lem: uniqueness} (5). Now, if $\lambda_1 = \lambda_2$, then $\alpha = \zeta^{16}$, which is not a root of Lehmer's polynomial $P_{10}$. So, we must have $\lambda_1 \neq \lambda_2$, which concludes the proof.
\end{proof}

\bibliographystyle{amsplain}
\bibliography{Enriques}

\end{document}